\newcommand{\here}[2]{\tikz[remember picture]{\node[inner sep=0](#2){#1}}}
\newtheorem {theorem}{Theorem}
\newtheorem {lemma}[theorem]{Lemma}
\newtheorem {proposition}[theorem]{Proposition}
\newtheorem {corollary}[theorem]{Corollary}
\theoremstyle{remark}
\newtheorem {remark}[theorem]{Remark}
\numberwithin{equation}{section}
\numberwithin{theorem}{section}
\title{Ozsv\'ath-Szab\'o $d$-invariants of Almost Simple Linear Graphs}
\author{\c{C}a\u gr{\i} Karakurt }
\address{Department of Mathematics, Bo\u{g}azi\c{c}i University, Bebek 34342}
\email{\href{mailto:cagri.karakurt@boun.edu.tr}{cagri.karakurt@boun.edu.tr}}
\author{O{\u{g}}uz \c{S}avk}
\address{Department of Mathematics, Bo\u{g}azi\c{c}i University, Bebek 34342}
\email{\href{mailto:oguz.savk@boun.edu.tr}{oguz.savk@boun.edu.tr}}
\date{}
\begin{document}
 
\begin{abstract}
We describe an effective method for simultaneously computing $d$-invariants of infinite families of  Brieskorn spheres $\Sigma(p,q,r)$ with $pq+pr-qr=1$.
\end{abstract}
\maketitle

\section{Introduction}
In \cite{OS03b}, Ozsv\'ath and Szab\'o introduced the $d$-invariant, which is a numerical invariant of $\mathrm{spin}^c$ three-manifolds obstructing the existence of rational homology cobordisms. Since then, this invariant has been successfully used for answering a number of questions in three-dimensional topology and knot theory, see for example \cite{OS06}, \cite{MO07}, and \cite{S12}.

Throughout $p$, $q$ and $r$ denote pairwise relatively prime, ordered, positive integers satisfying 
\begin{equation}
\label{pqr}
pq+pr-qr=1.
\end{equation}
The purpose of the present paper is to compute the Ozsv\'ath-Szab\'o $d$-invariant of the Brieskorn sphere $Y=\Sigma(p,q,r)$ which is the link of singularity $x^p + y^q + z^r = 0$. This manifold is the plumbed three-manifold corresponding  to the graph shown in Figure~\ref{fig:plumb}. We shall call such graphs \emph{almost simple linear graphs} (or ASL-graphs for short). The most well-known example of an ASL-graph is the $E_8$ graph which gives the plumbing description of the Poincar\'e homology sphere $\Sigma(2,3,5)$.

\begin{figure}[ht]  \begin{center}
		\includegraphics[width=0.5\textwidth]{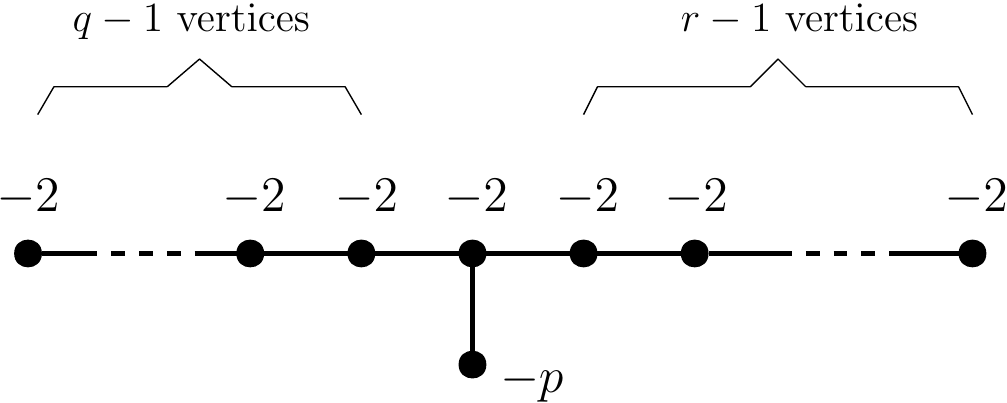}       
		\caption{The plumbing graph for $Y$}
\label{fig:plumb}
	\end{center}
\end{figure}

Combinatorial techniques for computing $d$-invariants of larger classes of plumbed three-manifolds were previously known by Ozsv\'ath and Szab\'o \cite{OS03a} and N\'emethi \cite{Nem05}. With the help of computers, these techniques successfully determine $d$-invariant of a single plumbed three-manifold. Except a few notable works \cite{Nem07}, \cite{BN13}, \cite{E13}, and \cite{LE18}, they usually yield intractable result when one tries to find $d$-invariants of infinite families simultaneously. Our approach is based on the Ozsv\'ath-Szab\'o method but we simplified it greatly for the case of ASL-graphs. Basically, Ozsv\'ath-Szab\'o method requires finding the maximum value of a quadratic form on a lattice whose dimension equals the number of vertices in the graph. We show that for ASL-graphs the maximum is achieved in a bounded region contained  in a two dimensional lattice.  Before  stating our main result, first we note that when $p$ is even the $d$-invariant of $\Sigma (p,q,r)$ can easily be shown to be equal to $(q+r)/4$ (see Proposition~\ref{deven}), so the interesting examples arise only when $p$ is odd. Now define a function \begin{align}
\label{eq:fdefn} 
f(x,y)=-(q+r)x^2+4qxy-4(q-p)y^2-4y,
\end{align}
\noindent Consider the region $$R=\{(x,y)\,:\, -p\leq x\leq p,\; 0\leq y\leq (p-1)/2,\; \text{and } f(1,1)\leq f(x,y)\}\subset \mathbb{R}^2.$$ Also call elements of $\mathfrak{L}=\{-p,-p+2,\dots,p\}\times \{0,1,\dots,(p-1)/2\}$ \emph{lattice points}. For the convenience of the reader, we draw the region $R$ together with the lattice points in Figure~\ref{fig:region} for the triplet $(p,q,r) = (2n+1,2n+2,4n^2+6n+1)$ and $n=5$.

\begin{figure}[ht]  \begin{center}
		\includegraphics[width=0.8\textwidth]{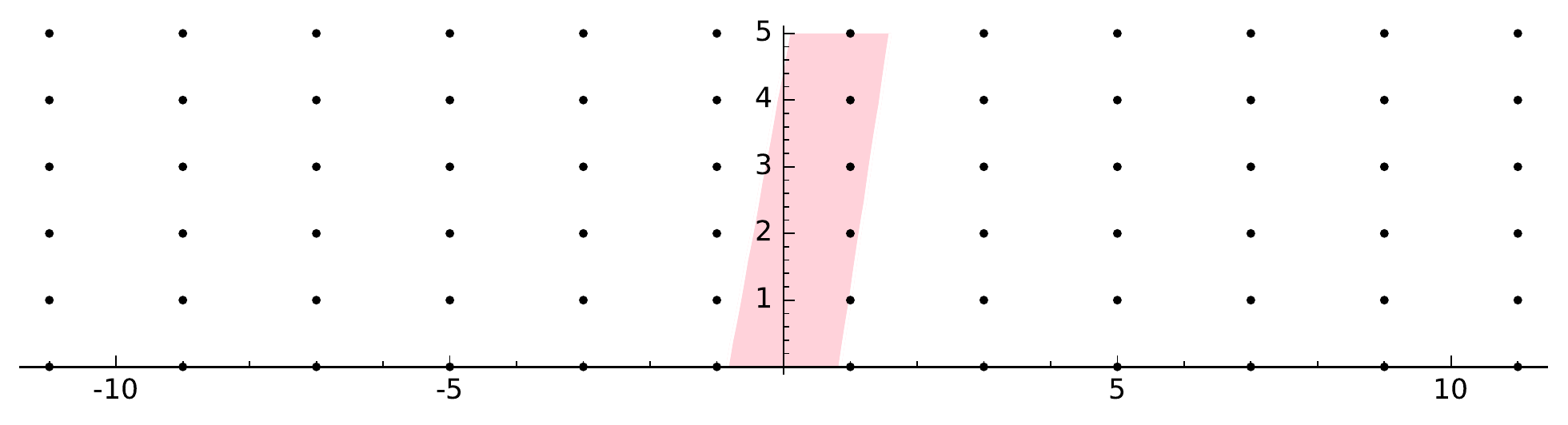}       
		\caption{The region $R$ for $(p,q,r)=(11,12,131)$}
\label{fig:region}
	\end{center}
\end{figure}

\begin{theorem} 
\label{theo:latt} 
For an odd $p$, the $d$-invariant of the Brieskorn sphere $\Sigma (p,q,r)$ is $$d(\Sigma (p,q,r))= \frac{1}{4}\left [ \left (\underset{(a,m)\in \mathfrak{L}}{\mathrm{max}}f(a,m)\right )+q+r\right ] = \frac{1}{4}\left [ \left (\underset{(a,m)\in \mathfrak{L}\cap R}{\mathrm{max}}f(a,m)\right )+q+r\right ] .$$
\end{theorem}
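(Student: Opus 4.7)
The plan is to apply the Ozsv\'ath--Szab\'o algorithm for $d$-invariants of negative definite plumbings and then to exploit the special structure of the ASL-graph to reduce the resulting optimization to a two-dimensional problem. Let $E$ denote the negative definite plumbing intersection form for the ASL-graph of Figure~\ref{fig:plumb}, with vertex set $V$; then
\[
d(Y) \;=\; \max_{K}\; \frac{K^T E^{-1} K + |V|}{4},
\]
where $K$ ranges over characteristic covectors $K \in \mathbb{Z}^V$, i.e.\ those satisfying $K_v \equiv E_{vv} \pmod{2}$ at every vertex. The first step is to read off the weights of the ASL-graph from the continued-fraction expansions associated with $(p,q,r)$ and to write $E$ explicitly, using the identity $pq+pr-qr=1$ freely during simplifications.

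The heart of the argument is a dimension reduction. The ASL-graph consists of a single branching vertex with three linear arms emanating from it; each arm contributes a tridiagonal block to $E$ whose inverse admits a closed form in terms of continued-fraction data. Since interior vertices of an arm are connected only to their arm-neighbours, the interior components of $K$ influence $K^T E^{-1} K$ only through the boundary value adjacent to the central vertex. I would therefore maximize arm-by-arm in closed form, collapsing the full optimization to one over just two remaining integer parameters $(a,m)$. Careful simplification using $pq+pr-qr=1$ should yield
\[
K^T E^{-1} K + |V| \;=\; f(a,m) + q + r,
\]
while the characteristic-covector parity conditions together with a reflection symmetry $m \mapsto -m$ should cut out precisely the lattice $\mathfrak{L}$: the step of $2$ in the first coordinate reflects the parity condition at the central vertex, and the range $0 \le m \le (p-1)/2$ of the second coordinate arises from the symmetry.

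For the second equality, once the first is in hand the argument is essentially tautological. When $p$ is odd and $p \ge 3$ one has $1 \in \{-p,-p+2,\dots,p\}$ and $(p-1)/2 \ge 1$, so $(1,1)\in \mathfrak{L}$; consequently $\max_{\mathfrak{L}} f \ge f(1,1)$, and any $(a,m) \in \mathfrak{L}$ outside $R$ satisfies $f(a,m) < f(1,1)$ by the very definition of $R$ and therefore cannot be a maximizer. I expect the main obstacle to be the dimension-reduction step: executing the arm-by-arm elimination, keeping track of the continued-fraction arithmetic, and matching the resulting two-variable quadratic form on the nose to $f(a,m)$ will require a careful and somewhat lengthy computation, whereas everything on either side of this step is essentially formal.
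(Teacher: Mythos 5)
Your overall architecture (reduce the $(q+r)$-dimensional maximization over characteristic covectors to a two-parameter family, then identify the resulting quadratic with $f(a,m)$, then observe the second equality is formal) matches the paper's, and your treatment of the second equality is exactly right. But the central step --- the dimension reduction --- contains a genuine gap. You assert that ``the interior components of $K$ influence $K^T E^{-1} K$ only through the boundary value adjacent to the central vertex,'' and that one can therefore ``maximize arm-by-arm in closed form.'' This is false as stated: for a connected negative definite plumbing every entry of $E^{-1}$ is nonzero (indeed strictly negative), so $K^T E^{-1} K$ couples every pair of vertices, including interior vertices of different arms. There is no block structure in $E^{-1}$ to exploit, and no closed-form per-arm maximization falls out of the tridiagonal structure of $E$ itself. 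The reduction to the two-parameter family $k_{a,m}$ is precisely the hard content of the theorem, and your proposal treats it as a routine (if lengthy) computation.

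The paper accomplishes this reduction not algebraically but combinatorially, via the Ozsv\'ath--Szab\'o full-path algorithm: Theorem~\ref{correctionterm} lets one restrict to terminal classes of \emph{good full paths}, and Lemmas~\ref{simple} and~\ref{simple2} show that on a $(-2)$-chain an initial class supporting a good full path has at most one entry equal to $2$, with terminal class having at most one entry equal to $-2$ in a determined position. Applied to the two long arms this forces the candidate classes into the family $k_{a,m}$ (Lemma~\ref{max}); only then is $k_{a,m}^2=f(a,m)$ computed, via the N\'emethi--Nicolaescu formula for the entries of $I^{-1}$ (Lemma~\ref{max2}), not by inverting arm blocks. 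Your proposal contains no substitute for Lemmas~\ref{simple}--\ref{simple2}. Relatedly, your explanation of the bound $0\le m\le (p-1)/2$ as coming from ``a reflection symmetry $m\mapsto -m$'' is not the actual mechanism: in the paper that bound emerges from the good/bad dichotomy for full paths (e.g.\ the condition that $\tilde k_{a,m}\sim k_{-a+2m,m}$ remain within the adjunction range at the $-p$ vertex). To repair your approach you would either need to import the full-path analysis or prove an honest elimination statement (e.g.\ via a Schur-complement or lattice-cohomology reduction) showing that the maximizer vanishes on the $q$-arm and has a single $-2$ on the $r$-arm; neither is supplied.
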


Note that the lattice point $(1,1)$ is always contained in the region $R$, so the maximum is taken on a non-empty set in the latter equation. Describing this maximum for general ASL-graphs is still highly non-trivial but by plugging in $(a,m)=(1,1)$ we immediately see $d(\Sigma(p,q,r)) \geq p-1 >0$. Since $d$-invariant is a rational homology cobordism invariant and it provides an invariant of rational concordance classes of knots by passing to double branched covers, we obtain the following obstruction.

\begin{corollary} 
\label{theo:rball} 
The Brieskorn sphere $\Sigma (p,q,r)$ does not bound a rational homology ball. Consequently, the pretzel knot $K(-p,q,r)$ is not rationally slice.
\end{corollary}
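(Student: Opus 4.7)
The plan is to derive both statements from the positivity of $d(\Sigma(p,q,r))$. First I would handle the parity cases separately. When $p$ is odd, Theorem~\ref{theo:latt} applies and I would simply evaluate the function $f$ at the lattice point $(a,m)=(1,1)$: substituting into \eqref{eq:fdefn} and using the defining relation $pq+pr-qr=1$ yields $f(1,1) = -(q+r)+4q-4(q-p)-4 = 4p-q-r-4$, so the formula in Theorem~\ref{theo:latt} gives $d(\Sigma(p,q,r)) \geq \tfrac14\bigl(f(1,1)+q+r\bigr) = p-1 > 0$. When $p$ is even, Proposition~\ref{deven} gives $d(\Sigma(p,q,r))=(q+r)/4 > 0$ directly. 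Either way $d(\Sigma(p,q,r))$ is strictly positive.

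Next I would invoke the standard obstruction of Ozsv\'ath--Szab\'o from \cite{OS03b}: if an integral homology sphere $Y$ bounds a rational homology four-ball, then $d(Y)=0$. Since $\Sigma(p,q,r)$ is an integral homology sphere with $d > 0$, this forces the first assertion that $\Sigma(p,q,r)$ does not bound a rational homology ball.

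For the knot-theoretic consequence, I would use the classical identification of the double cover of $S^3$ branched along the pretzel knot $K(-p,q,r)$ with the Brieskorn sphere $\Sigma(p,q,r)$. The remaining step is the standard fact that if $K \subset S^3$ is rationally slice, meaning $K$ bounds a properly embedded smooth disk in some rational homology four-ball $W$, then the double branched cover $\Sigma_2(K)$ extends to the double branched cover of $W$ along this disk, which is itself a rational homology four-ball. Applying this principle contrapositively, the failure of $\Sigma(p,q,r)$ to bound a rational homology ball implies that $K(-p,q,r)$ cannot be rationally slice.

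The only real content is the positivity computation for odd $p$; verifying $f(1,1) = 4p-q-r-4$ via the arithmetic identity $pq+pr-qr=1$ is the one step that requires care, but it is a direct substitution rather than a genuine obstacle. Everything else is an appeal to well-established machinery.
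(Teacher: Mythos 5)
Your proposal is correct and follows essentially the same route the paper takes: evaluating $f(1,1)=4p-q-r-4$ to get $d(\Sigma(p,q,r))\geq p-1>0$ (with Proposition~\ref{deven} covering even $p$), then invoking the rational homology cobordism invariance of $d$ and passing to double branched covers for the pretzel knot statement. No gaps.
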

 
The Brieskorn sphere $\Sigma(p,q,r)$ has a non-trivial Fintushel-Stern invariant, so it was previously known that $\Sigma(p,q,r)$ does not bound any $\mathbb{Z}/2\mathbb{Z}$-acyclic four-manifold and hence  the pretzel knot $K(-p,q,r)$ is not smoothly slice \cite{FS85}. A recent work of Issa and McCoy \cite{IM18} shows non-trivial Fintushel-Stern invariant implies $d$-invariant is non-zero. So the above corollary can be inferred from their work as well.

By specializing to certain subfamilies and analysing their lattice points we are able to obtain the exact values of $d$-invariants. To illustrate this point, we give explicit examples of such families. 

\begin{theorem} \label{theo:d-inv}
For $n \geq 1$ we have
\begin{enumerate}
\item $d(\Sigma(2n+1,4n+1,4n+3)) = 2n$, \label{1}
\item $d(\Sigma(2n+1,3n+2,6n+1)) = 2n$,
\item $d(\Sigma(2n+1,3n+1,6n+5)) = 2n$,
\item $d(\Sigma(4n+3,5n+4,20n+11) = 6n+2$,
\item $d(\Sigma(2n+1,2n+2,4n^2+6n+1)) = n^2 +n.$
\end{enumerate}
\end{theorem}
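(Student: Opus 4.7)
The plan is to apply Theorem \ref{theo:latt} to each of the five families and compute the maximum of $f$ on $\mathfrak{L}$ explicitly. Completing the square in $m$ and using the defining relation $pq+pr-qr=1$, one obtains the convenient identity
\[
f(a,m) \;=\; \frac{a^{2}-2qa+1}{q-p} \;-\; 4(q-p)\left(m-\frac{qa-1}{2(q-p)}\right)^{2}.
\]
Writing $g(a):=(a^{2}-2qa+1)/(q-p)$ and $m^{*}(a):=(qa-1)/(2(q-p))$, the function $g$ is a convex parabola in $a$ with minimum at $a=q>p$, so its maximum on $[-p,p]$ is attained at the endpoints; for each odd $a\in[-p,p]$ the best lattice point $(a,m)\in\mathfrak{L}$ is then the one whose second coordinate is closest to $m^{*}(a)$ in $[0,(p-1)/2]$, paying a rounding cost $4(q-p)(m-m^{*}(a))^{2}$.

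For each family I would locate the candidate maximizer by computing $m^{*}(1)$ and rounding: this yields $(1,1)$ for families (1)--(3), $(1,2)$ for family (4), and $(1,n)=(1,(p-1)/2)$ for family (5) (the last because $m^{*}(1)=n+\tfrac{1}{2}$ just exceeds the admissible range). Direct substitution gives $f(1,1)=-4,\,-n-3,\,-n-6$ for families (1)--(3), $f(1,2)=-n-7$ for family (4), and $f(1,n)=-4n-3$ for family (5); in each case the formula $d=\tfrac{1}{4}(\max f + q + r)$ of Theorem \ref{theo:latt} reproduces the announced value.

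The substantive step is to verify that no other lattice point in $\mathfrak{L}\cap R$ beats the candidate. Using the displayed identity, for each odd $|a|\geq 3$ the drop $g(1)-g(a)$ can be computed in closed form, and must be shown to outweigh any gain from rounding flexibility in $m$. For families (1)--(4) this reduces to a short linear or quadratic inequality in $n$, and the region $R$ is in fact a narrow hyperbolic wedge containing only a handful of lattice points to dispatch. The main obstacle will be family (5), where $q-p=1$ weakens the rounding penalty and $R$ stretches far in the $m$-direction, admitting many lattice points; I would handle this case by restricting first to the boundary $m=n$, where $f(\cdot,n)$ is a concave quadratic in $a$ cleanly maximized at $a=1$, and then use the identity to eliminate interior lattice points by comparing their $f$-values against $f(1,n)=-4n-3$ through a direct quadratic estimate.
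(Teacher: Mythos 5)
Your algebraic identity is correct (using $pq+pr-qr=1$ one gets $(q+r)(q-p)=q^2-1$, whence $f(a,m)=\frac{a^2-2qa+1}{q-p}-4(q-p)\bigl(m-\frac{qa-1}{2(q-p)}\bigr)^2$), and your candidate maximizers and their values check out in all five families: $m^*(1)$ equals $1$, $\frac{3n+1}{2n+2}$, $\frac32$, $\frac{5n+3}{2n+2}$ and $n+\frac12$ respectively, and the resulting $f$-values $-4$, $-n-3$, $-n-6$, $-n-7$, $-4n-3$ agree with the paper and give the stated $d$-invariants via Theorem~\ref{theo:latt}. Your organization is genuinely dual to the paper's: you fix $a$ and complete the square in $m$, locating an optimal real height $m^*(a)$ and a rounding cost, whereas the paper fixes $m$ and completes the square in $x$, producing for each slice an interval with center $\mathfrak{c}(m)$ and radius $\mathfrak{r}(m)$ and then bounding the distance $\mathfrak{d}(m)$ to the nearest odd integer. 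Both reduce the problem to the same kind of fractional-part arithmetic.

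The gap is that the step you yourself identify as substantive is not carried out, and the scheme as literally stated does not close. First, $g(a)=(a^2-2qa+1)/(q-p)$ is decreasing on $[-p,p]$ (vertex at $a=q>p$), so its maximum is at $a=-p$ alone and $g(1)-g(a)=\frac{(a-1)(2q-a-1)}{q-p}$ is \emph{negative} for every $a\le -1$; for those $a$ the ``drop outweighs the rounding gain'' comparison is vacuous and the case $a=-1$ is not even covered by your ``$|a|\ge 3$'' clause. (All negative $a$ can in fact be dispatched in one line: for $a<0$ and $m\ge 0$ every term of $f(a,m)$ is nonpositive, so $f(a,m)\le -(q+r)a^2\le -(q+r)$, which is below the candidate value precisely because $d>0$.) Second, for $a\ge 3$ the crude bound $f(a,m)\le g(a)$ is insufficient — e.g.\ in family (2), $g(3)=\frac{-18n-2}{n+1}\to -18$ exceeds $f(1,1)=-n-3$ for $n\ge 14$ — so you must actually compute the rounding cost at each $a$, i.e.\ the distance from $m^*(a)$ to the nearest integer in $[0,(p-1)/2]$, as a function of the residue class of $a$. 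That computation (e.g.\ $m^*(2j+1)=3j+\frac32-\frac{j+1}{n+1}$ in family (2), giving $f(2j+1,m)\le -4(2j+1)-(n-1)$) is exactly the content of the paper's case analysis of $\mathfrak{d}(m)$ versus $\mathfrak{r}(m)$, transposed, and it is the part of the proof that is missing from your write-up.
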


The item \eqref{1} of our theorem was first claimed without proof in a joint work of the first author with Hom and Lidman as a side remark \cite[Remark 3.3]{HKL16}, note that our orientation convention is the opposite of \cite{HKL16}. Subsequently, the remark was used directly or indirectly by a considerable number of authors \cite[Theorem~1.6]{St17}, \cite[Theorem~1.7]{DM19}, \cite[Theorem~1.1]{DS17}, \cite[Theorem~1.7]{AKS19} and \cite[Theorem~1.4]{DM19} in showing or using that the $\Sigma(2n+1,4n+1,4n+3)$ form a linearly independent family in the integral homology cobordism group. Further in \cite[Theorem~1.1]{DHST18}, the $\Sigma(2n+1,4n+1,4n+3)$ was shown to be first family of integral homology spheres generating an infinite-rank summand in the integral homology cobordism group. None of the aforementioned results relied specifically on $\Sigma(2n+1,4n+1,4n+3)$ but they require a special part of Heegaard Floer homology of the members to be sufficiently complicated.
     
In a forthcoming work \cite{KS19}, computing the connected Heegaard Floer homology of such Brieskorn spheres and analysing their almost local equivalence classes in the sense of \cite{HHL18} and \cite{DHST18} respectively, we will prove that they also generate an infinite-rank summand in the integral homology cobordism group. Using the corresponding pretzel knots, we will show related results in the smooth concordance group of knots.

Thanks to recent work of the first author with Lidman and Tweedy \cite{KLT19}, the effect of splicing two Seifert fibers on $d$-invariance is well understood. In particular, combining Theorem~\ref{theo:d-inv} with techniques of \cite[Section 4]{KLT19} one can compute $d$-invariants of large classes of graph homology spheres.
 
\subsection*{Organization} The structure of the paper is as follows. In Section~\ref{Plumbings} and Section~\ref{HFHofPlumb}, we present preliminary notions about plumbings and their Ozsv\'ath-Szab\'o $d$-invariant. In Section~\ref{simplelineargraphs}, we introduce simple and almost simple linear graphs, and analyse their good full paths. Then we compute Ozsv\'ath-Szab\'o $d$-invariant of almost simple linear graphs. By investigating their lattice points, we finally carry out some explicit calculations in Section~\ref{calculation}.

\section{Preliminaries on Plumbings}
\label{Plumbings}
A \emph{plumbing graph} $G$ is a weighted tree with vertex set $\mathrm{Vert}(G)$ such that each vertex $v_i$ is decorated by an integer $e_i$ called the \emph{weight} of $v_i$. A plumbing graph gives rise to a simply connected four-manifold with boundary $X(G)$ which is obtained by plumbing together a collection of disk bundles over the two-sphere, so that the Euler number of the disk bundle corresponding to the vertex $v_i$ is given by weight $e_i$. Let $Y=Y(G)$ be the three-manifold which is the boundary of $X=X(G)$. Then $Y$ is said to be a \emph{plumbed three-manifold}.

The second homology group $H_2(X; \mathbb{Z})$ is the lattice $\mathcal{L}$ generated by $\mathrm{Vert}(G)$, and a symmetric bilinear form of $\mathcal{L}$ is given by the associated \emph{intersection matrix} $I=(a_{ij})$ with entries as follows
\[ a_{ij} = \begin{cases} 
      e_i, & \text{if} \ v_i=v_j, \\
      1, & \text{if} \ v_i \ \text{and} \ v_j \ \text{is connected by one edge}, \\
      0, & \text{otherwise}. 
   \end{cases}
\]
We call $G$ a \emph{negative definite graph} if the corresponding intersection matrix is negative-definite, i.e., $\mathrm{signature}(I)=-|G|$, where $|G|$ denotes the number of vertices of $G$. If $\mathrm{det}(I)= \pm 1$, then $G$ is said to be \emph{unimodular}. In this paper, all our graphs are negative-definite and unimodular except in Section~\ref{simpleg} where we study the plumbings corresponding to lens spaces.

Let $p,q$ and $r$ be pairwise relatively prime, ordered, positive integers. A \emph{Brieskorn integral homology sphere} $\Sigma(p,q,r)$ is the boundary of the unimodular negative-definite star-shaped graph with three branches as shown in Figure~\ref{fig:star}. Note that the plumbing graph is a good resolution graph of the singularity $x^p + y^q + z^r = 0$ whose link is Brieskorn sphere $\Sigma(p,q,r)$.

By \cite[Section 1.1]{Sav02}, the integer weights in the graph are found as follows. One first investigates unique integers $e_0,p',q',r'$ solving the Diophantine equation
\begin{equation}
e_0pqr+p'qr+pq'r+pqr'=-1
\end{equation} 
where $1\leq p' \leq p-1$, $1\leq q' \leq q-1$ and $1\leq r' \leq r-1$.  Then the weights $t_{ij}$ are found from the ratios
\begin{align*}
p/p' = [t_{11}, t_{12}, \ldots,t_{1m_{1}}] \\
q/q' = [t_{21}, t_{22}, \ldots,t_{2m_{1}}] \\
r/r' = [t_{31}, t_{32}, \ldots,t_{3m_{3}}]
\end{align*}
where $[t_{i1},\ldots,t_{im_{i}}]$ denotes the continued fraction
\begin{equation*}
[t_{i1},t_{i2}, \ldots,t_{im_{i}}]=t_{i1}-\cfrac{1}{t_{i2}-\cfrac{1}{\cdots-\cfrac{1}{t_{im_{i}}}}}
\end{equation*}

\begin{figure}[ht]  \begin{center}
		\includegraphics[width=0.3\textwidth]{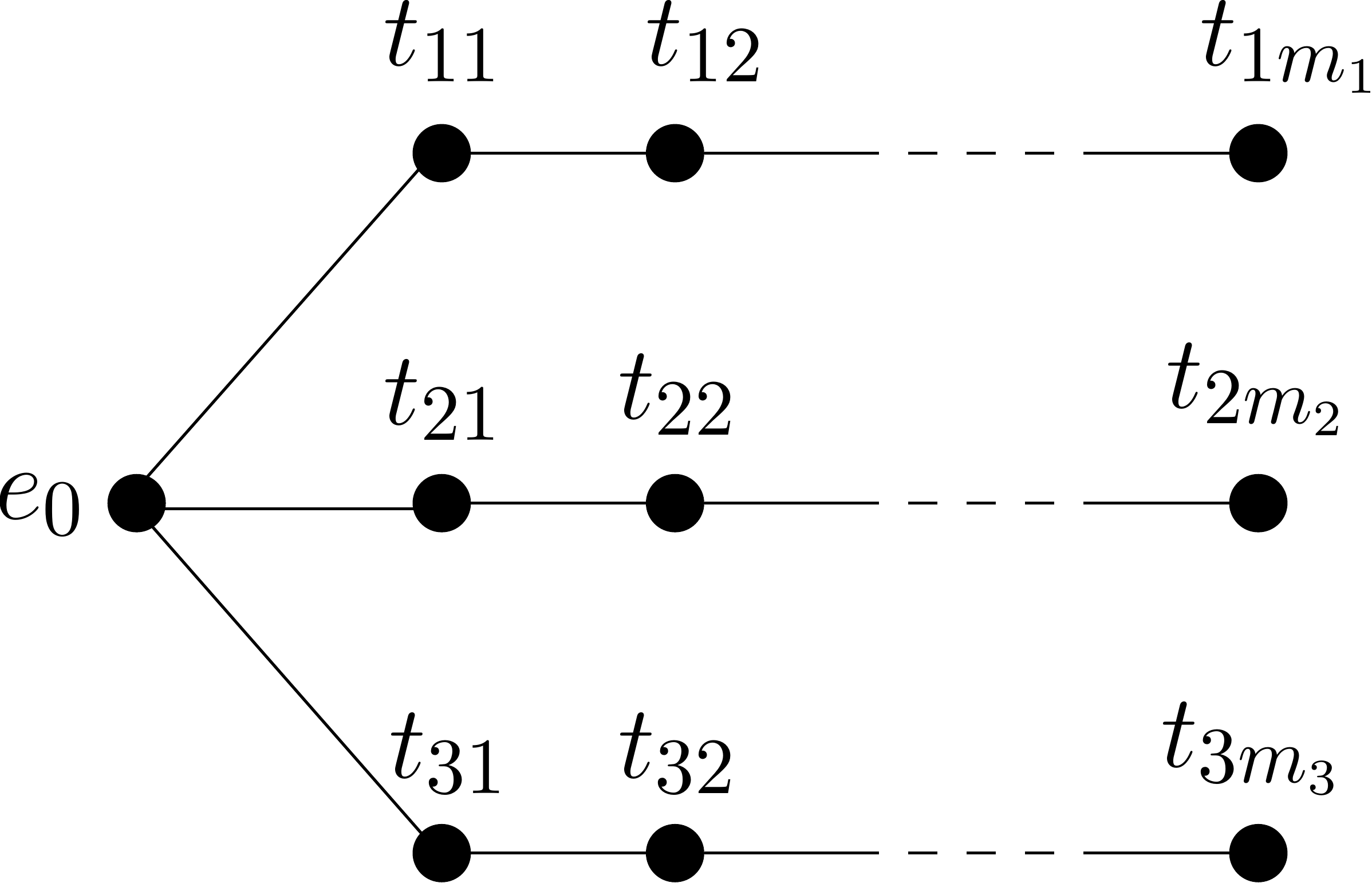}       
		\caption{The star-shaped graph}
\label{fig:star}
	\end{center}
\end{figure}

For the lattice $\mathcal{L}$, we have the short exact sequence
\begin{equation}
0 \longrightarrow \mathcal{L} \xrightarrow{\mathrm{PD}} \mathcal{L}' \longrightarrow H \longrightarrow 0  
\end{equation}
where $\mathcal{L}'$ is the dual lattice $\mathrm{Hom}_{\mathbb{Z}}(\mathcal{L},\mathbb{Z})\cong H^2(X;\mathbb{Z})\cong H_2(X,Y;\mathbb{Z})$, and $H=H_1(Y;\mathbb{Z})$, the latter is trivial if $G$ is unimodular.  Here, $\mathrm{PD}(x)$ is the unique element in $\mathcal{L}'$ which evaluates on each $y \in \mathcal{L}$ as $\langle \mathrm{PD}(x), y \rangle =x^tIy$ by treating $x$ and $y$ as column matrices and $x^t$ denotes the transpose of $x$. 

We say that $k\in  \mathcal{L}'$ is $\textit{characteristic cohomology class}$ if for every vertex $v_j$, we have $\langle k, v_j \rangle +e_j = 0 \text{ mod } 2$. The set of characteristic cohomology classes is denoted by $\mathrm{Char}(G)$.

Let $k^2$ denote the square of the characteristic cohomology class $k$, $I^{-1}$ the inverse of the intersection matrix $I$. It is calculated by the formula $k^2=k^tI^{-1}k$ by treating $k$ as a column matrix and $k^t$ as its transpose. This quantity is called \emph{degree} and it is denoted by $\mathrm{deg}(k)$. Note that the degree of characteristic cohomology classes in the same full path are equal. Further, $\mathrm{deg}(k) = \mathrm{deg}(-k)$.

To calculate $k^2$, one must find the inverse of the intersection matrix $I$. Due to the work of N\'emethi and  Nicolaescu in \cite[Section 5]{NN05}, there is a simple way for finding entries of $I^{-1}$.

For any $v,w \in \mathrm{Vert}(G)$, let $I^{-1}_{vw}$ denotes the $(v,w)$-entry of the intersection matrix $I^{-1}$. Since $I$ is negative definite and $G$ is connected, $I^{-1}_{vw} <0$ for each entry $v,w$. Also as $I$ is described by plumbing graph with integer weights, we can interpret these entries in the following way. For any two vertex $v,w \in \mathrm{Vert}(G)$, let $p_{vw}$ be the unique minimal path in $G$ connecting $v$ and $w$. Let $I_{(vw)}$ be the matrix obtained from $I$ deleting all the rows and columns corresponding to the vertices on the path $p_{vw}$, i.e, $I_{(vw)}$ is the intersection matrix of the complement graph of the path $p_{vw}$. Then $I^{-1}_{vw} = -|\mathrm{det}(I_{(vw)})/ \mathrm{det}(I)|$.

The determinants can be easily computed by using the recipe in \cite[Chapter 5.21]{EN85}. For example,
\begin{itemize}
\item[(1)] The determinant of the negative-definite linear graph associated to $\frac{p}{p'} = [t_{11}, t_{12}, \ldots,t_{1n_{1}}]$ is $(-1)^n p$,
\item[(2)] The determinant of the disconnected graph is the product of determinants of its connected components, 
\item[(3)] If $G$ is a star-shaped graph with central weight $e_0$ and three arms $\frac{p}{p'}, \frac{q}{	q'}, \frac{r}{r'}$ then the absolute value of its determinant is the numerator of $$e_0-\frac{p'}{p}-\frac{q'}{q}-\frac{r'}{r}.$$
\end{itemize}

\section{Preliminaries on $d$-invariant of plumbed three-manifolds}
\label{HFHofPlumb}
In 2003, Ozsv\'ath and Szab\'o compute Heegaard Floer homology of plumbed three-manifolds in \cite{OS03a}. In particular, they describe a purely combinatorial algorithm to partially compute the plus flavor of Heegaard Floer homology $HF^+(-Y)$, where minus sign denotes the reverse orientation. They also give a practical formula for the $d$-invariant. Here, we review the part of Ozsv\'ath and Szab\'o's work related to the $d$-invariant computation. 

\subsection{Ozsv\'ath-Szab\'o algorithm}
Start with an \emph{initial} characteristic cohomology class $k$ satisfying
\begin{equation}
\label{adjunction}
e_j+2 \leq \langle k,v_j \rangle \leq -e_j
\end{equation}
for each $j$.
Construct a sequence of cohomology classes $k=k_0\sim k_1 \sim \ldots \sim k_n$, where $k_{j+1}$ is obtained from $k_j$ by choosing any vertex $v$  weighted by $e$ such that $$\langle k_j,v \rangle = -e,$$ and letting $k_{j+1}=k_j+2PD(v)$. This algorithm ends at the \emph{terminal} cohomology class $k_n= \ell$ where either
\begin{itemize}
\item  we have
\begin{equation}
\label{goodpath}
e_j \leq \langle \ell ,v_j \rangle \leq -e_j+2
\end{equation}
at each $j$ or
\item there is some $m$ for which
\begin{equation}
\label{badpath}
\langle \ell ,v_m \rangle > -e_m.
\end{equation}
\end{itemize}

A sequence of cohomology classes $k=k_0\sim k_1 \sim \ldots \sim k_n$ is called a \emph{full path}. We say that the initial cohomology class $k$ supports a \emph{good full path}, (and respectively a \emph{bad full path}) if its terminal cohomology class satisfies \eqref{goodpath} (respectively \eqref{badpath}).

\begin{remark}
\label{badhereditary} Bad full paths are hereditary. If an initial cohomology class known to support a bad full path in any subgraph then it has a bad full path in the ambient graph as well. 
\end{remark}

\subsection{Ozsv\'ath-Szab\'o $d$-invariant formula}
The \emph{valency} of a vertex $v_j \in \mathrm{Vert}(G)$ is the number of of edges containing $v_j$. A vertex $v_j \in \mathrm{Vert}(G)$ is said to be a \emph{bad vertex} of the plumbing graph if its valency strictly greater than the minus of corresponding Euler number.

Let $\mathfrak{C}$ denote the set of characteristic cohomology classes satisfying \eqref{adjunction}. Let $\mathfrak{C'}$ also denote the set of terminal characteristic cohomology classes of good full paths. Then the formula of $d$-invariant is given as follows.

\begin{theorem} [\cite{OS03a}, Corollary 1.3; \cite{OS03b}, Proposition 4.2]
\label{correctionterm}
Let $G$ be a unimodular and negative-definite graph with one bad vertex. Let $Y(G)$ be the plumbed integral homology sphere corresponding to $G$. Then 
\begin{equation}
\label{corrterm}
\displaystyle d(Y(G))=  \underset{\{k \in\mathrm{Char}(G)\}}{\max}\frac{k^2+ \vert G \vert}{4} = \underset{ \{k \in \mathfrak{C}\}}{\max}\frac{k^2+ \vert G \vert}{4} = \underset{\{k \in \mathfrak{C'}\}}{\max}\frac{k^2+ \vert G \vert}{4}
\end{equation}
\end{theorem}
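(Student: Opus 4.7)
The plan is to establish the three equalities in the stated formula in turn, in effect recovering the proofs in \cite{OS03a, OS03b}. For the first equality $d(Y(G)) = \max_{k\in\mathrm{Char}(G)}(k^2+|G|)/4$, the essential input is the Ozsv\'ath-Szab\'o formula relating $d$-invariants of integer homology spheres to spin$^c$ structures on negative definite fillings. Since $X(G)$ is negative definite with $\partial X(G) = Y(G)$ an integer homology sphere, every characteristic $k \in \mathcal{L}'$ is the first Chern class of a spin$^c$ structure on $X(G)$ that restricts to the unique spin$^c$ structure of $Y(G)$. The inequality $4 d(Y(G)) \geq k^2 + |G|$ is then the standard dimension-formula consequence; its sharpness, under the single-bad-vertex hypothesis, is the content of the combinatorial model for $HF^+$ in \cite{OS03a}.

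For the second equality, the argument is a monotonicity calculation on the dual lattice. The identity
\[(k+2\mathrm{PD}(v))^2 = k^2 + 4\langle k,v\rangle + 4 e_v,\]
together with the parity relation $\langle k,v\rangle \equiv e_v \pmod{2}$ forced by the characteristic condition, shows that violating the upper bound $\langle k,v\rangle \leq -e_v$ allows the move $k \mapsto k + 2\mathrm{PD}(v)$ to strictly increase $k^2$; symmetrically, violating the lower bound is corrected by subtracting $2\mathrm{PD}(v)$. Since $-k^2$ is a positive definite quadratic form on $\mathcal{L}'$, only finitely many $k$ satisfy $k^2 \geq c$ for any fixed $c$, so this process terminates at some $k \in \mathfrak{C}$ whose square is at least that of the original class.

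For the third equality, I would run the Ozsv\'ath-Szab\'o algorithm from an arbitrary $k \in \mathfrak{C}$. At each step $\langle k_j, v\rangle = -e_v$ holds by construction, so the identity above forces $k_{j+1}^2 = k_j^2$; the value of $k^2$ is thus conserved all the way to the terminal class. If the algorithm terminates in a good full path, its endpoint lies in $\mathfrak{C}'$ with the same square, so every initial class of this type is matched by an element of $\mathfrak{C}'$. The main obstacle, and the deepest point of the theorem, is to rule out the possibility that the maximum over $\mathfrak{C}$ is realized only by classes supporting bad full paths; this is precisely where the single-bad-vertex hypothesis enters, through the lattice-complex description of $HF^+$ in \cite{OS03a}, according to which only generators associated to good full paths contribute to $d(Y(G))$. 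This input, along with the sharpness statement in Step 1, are the two nontrivial Heegaard Floer ingredients and are taken as black boxes from the cited works.
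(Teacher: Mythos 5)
The paper gives no proof of this theorem --- it is imported verbatim from \cite{OS03a} and \cite{OS03b} --- so there is no internal argument to compare against; your outline is a faithful reconstruction of the proof in those references, with the two genuinely Floer-theoretic inputs (sharpness of the inequality $k^2+|G|\leq 4\,d(Y(G))$ for negative-definite fillings, and the fact that under the one-bad-vertex hypothesis only good full paths contribute) correctly isolated as black boxes, and the lattice-theoretic computations you supply for the second and third equalities are sound. The only loose end is minor: in your second step the correcting move is non-strict exactly when $\langle k,v\rangle=e_v$, so finiteness of $\{k:\, k^2\geq c\}$ by itself does not immediately exclude cycling among classes of equal square, but this termination issue is resolved in the cited sources and does not affect the structure of your argument.
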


\begin{remark}
Note that $\mathrm{Char}(G)$ has infinitely many elements. On the other hand, $\mathfrak{C}$ has finitely many elements but it is considerably large set. Among them, $\mathfrak{C'}$ is the smallest one even though it is still not easy to describe all elements lying in $\mathfrak{C'}$.
\end{remark}

\begin{remark}
\label{Elk}
Due to the work of Elkies in \cite{Elk95}, we have $k^2 = \vert G \vert \mod 8$. Therefore, the Ozsv\'ath-Szab\'o $d$-invariant is an even integer.
\end{remark}

\section{Ozsv\'ath-Szab\'o $d$-invariant of Almost Simple Linear Graphs}
\label{simplelineargraphs}
\subsection{Simple Linear Graphs}
\label{simpleg}
Let us call a linear plumbing graph $A_t$ whose $t$ vertices are all weighted $-2$ a \emph{simple linear graph}. To analyse good full paths of such graphs, we apply the Ozsv\'ath-Szab\'o algorithm. Note that the boundary of a simple linear graph is not an integral homology sphere since a simple linear graph is never unimodular, but the Ozsv\'ath-Szab\'o algorithm is still valid. 

\begin{lemma}
\label{simple}
For a simple linear graph $A_t$, let $k$ be the initial characteristic cohomology class with $\langle k,v_i \rangle =2$ for more than one $i \in \{1, \ldots, t \}$. Then there is no good full path supported by $k$.  
\end{lemma}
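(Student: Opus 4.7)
The plan is to show that every full path of $k$ must produce, at some intermediate step, a characteristic class $k_i$ with $\langle k_i, v_m\rangle \geq 4$ for some $m$; since once a vertex's value exceeds $-e_m = 2$ it can no longer be flipped and subsequent neighbor flips only increase it further, the overshoot persists to the terminal and satisfies \eqref{badpath}.

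To exhibit the forced overshoot, let $i_1 < i_2$ be two of the indices with $\langle k, v_{i_1}\rangle = \langle k, v_{i_2}\rangle = 2$, selected so that $i_2$ is the smallest such index strictly larger than $i_1$; hence $\langle k, v_j\rangle = 0$ for $i_1 < j < i_2$. An explicit bad full path is obtained by flipping $v_{i_1}, v_{i_1+1}, \ldots, v_{i_2-1}$ in this order. A direct induction on the step verifies that each vertex has value $2$ at the moment it is flipped: initially $\langle k, v_{i_1}\rangle = 2$ enables step $1$, and each subsequent flip at $v_{i_1+j-2}$ raises $v_{i_1+j-1}$'s value from $0$ to $2$, enabling step $j$. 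At the final step, flipping $v_{i_2-1}$ raises the value at $v_{i_2}$ from its untouched initial value $2$ to $4$, triggering \eqref{badpath}.

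To upgrade ``some bad full path'' to ``every full path is bad,'' I would argue by strong induction on $t$. The base case $t=2$ is immediate: only $k=(2,2)$ satisfies the hypothesis, and either available flip creates a $4$. For the inductive step, if some endpoint (say $v_t$) is never flipped in the given path, then: when $\langle k, v_t\rangle = 2$, the terminal value $\langle \ell, v_t\rangle = 2 + 2 n_{t-1} \geq 2$ directly violates \eqref{goodpath}; and when $\langle k, v_t\rangle = 0$, the flip sequence restricts to a good full path of $k|_{A_{t-1}}$ on the subgraph spanned by $v_1, \ldots, v_{t-1}$, which still has at least two values equal to $2$, contradicting the inductive hypothesis. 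The remaining case, in which both $v_1$ and $v_t$ are flipped, can be handled by applying Remark~\ref{badhereditary} to the sub-linear graph $A_{i_2-i_1+1}$ spanned by $\{v_{i_1}, \ldots, v_{i_2}\}$, on which $k$ restricts to a class with $2$s at both endpoints; the explicit construction above produces a bad full path in this subgraph which is then transferred to the ambient. The main obstacle is this final case: making rigorous the assertion that no interleaving of flips at other vertices in the ambient graph can avoid the propagation-and-collision phenomenon captured by the explicit flip sequence, which requires careful bookkeeping of the safety condition \eqref{adjunction} over the evolving configuration of $2$s and $-2$s.
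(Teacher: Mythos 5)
Your explicit flip sequence (push the $2$ at $v_{i_1}$ rightward through the intervening zeros until it collides with the untouched $2$ at $v_{i_2}$, producing a $4$) is exactly the bad full path constructed in the paper's proof, which then finishes by restricting to the sub-linear graph spanned by $v_{i_1},\dots,v_{i_2}$ and invoking Remark~\ref{badhereditary}. Where you diverge is in your final paragraph: you correctly observe that exhibiting one bad full path does not, by itself, rule out the existence of a good one, and you try to close this by induction on $t$, conceding that the case of interleaved flips at other vertices remains open. The paper does not close this by induction either; it relies on a standard background fact of the Ozsv\'ath--Szab\'o algorithm, namely that whether a full path starting from a given initial class terminates in a good or a bad vector is independent of the order in which vertices are selected. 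That fact is used silently throughout the paper (e.g.\ in Lemma~\ref{simple2} and Lemma~\ref{max}, where specific move orders are chosen freely), and once it is granted a single bad full path suffices, so your unfinished induction is superfluous; without it, your argument and the paper's are incomplete at exactly the same point. Two minor inaccuracies in your write-up: once some value reaches $4$ the path is already terminal by \eqref{badpath}, so there are no ``subsequent neighbor flips'' whose effect needs to be tracked; and in your endpoint case the assertion that a terminal value of $2$ at an unflipped $v_t$ violates \eqref{goodpath} does not match \eqref{goodpath} as literally stated (its upper bound is $-e_t+2=4$) --- the real point there is that a vertex with value $2=-e_t$ still admits a move, so such a vector is not yet terminal.
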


\begin{proof}
Since $k$ is the initial characteristic cohomology class, $\langle k,v_i \rangle \in \{0,2\}$ for any $i$. For $t=2$, the initial cohomology class $k=(2,2)$ supports a bad full path for $A_2$ whose terminal cohomology class is obtained by adding $2PD(v_0)$ to $k$. Suppose $t>2$. If $k$ is the initial cohomology class of the form $(2,0,0,\ldots,0,0,2)$ then under the Ozsv\'ath-Szab\'o algorithm the bad full path starting from $K$ is shown as below.
$$k=(2,0,0,\ldots,0,0,2) \sim (-2,2,0,\ldots,0,0,2) \sim \ldots \sim (0,0,0,\ldots,-2,2,2).$$
For the general case, we consider the restricted part of initial cohomology class $k$ such that $\langle k,v_i \rangle= \langle k,v_j \rangle =2$ and $\langle k,v_s \rangle =0$ for $i<s<j$. Then again under the Ozsv\'ath-Szab\'o algorithm this cohomology class admits a bad full path for subgraph of $A_m$ as shown above. By Remark~\ref{badhereditary}, so $k$ does for $A_t$.
\end{proof}

\begin{lemma}
\label{simple2}
For a simple linear graph $A_t$, terminal cohomology classes of good full paths are either $(0,0,\ldots,0)$ or 
\[ (0, \ldots,0,\here{-2}{fromhere},0,\ldots \ldots ,0). \]
\begin{tikzpicture}[remember picture, overlay]
\node[font=\scriptsize, below right=12pt of fromhere] (tohere) {$(t-s+1)^{th}$-entry};
\draw[Stealth-] ([yshift=-4pt]fromhere.south) |- (tohere);
\end{tikzpicture}
\noindent%
\\
starting from the initial cohomology class
\noindent%
\[
(0, \ldots ,0,\here{2}{fromhere},0,  \ldots,0)
\]
\begin{tikzpicture}[remember picture, overlay]
\node[font=\scriptsize, below right=12pt of fromhere] (tohere) {$s^{th}$-entry};
\draw[Stealth-] ([yshift=-4pt]fromhere.south) |- (tohere);
\end{tikzpicture}
\end{lemma}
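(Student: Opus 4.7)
My plan is to apply the Ozsv\'ath-Szab\'o algorithm directly to the initial class and exhibit an explicit firing sequence whose terminal is the claimed class. Let $k_0$ denote the initial class with $\langle k_0, v_s\rangle = 2$ and all other evaluations zero. Since $v_s$ is the only vertex with $\langle k_0, v_s\rangle = -e_s = 2$, the algorithm must begin by firing at $v_s$, producing a class with $-2$ at position $s$ and $2$ at positions $s-1$ and $s+1$ (interpreting out-of-range positions as vacuous throughout).

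I would then organize all subsequent fires into $t-s+1$ blocks: block $i$ (for $1 \le i \le t-s+1$) consists of firing the vertices $v_{s+i-1}, v_{s+i-2}, \ldots, v_i$ in that order, so block $1$ begins with the initial fire at $v_s$ itself. The key inductive claim is that the state at the start of block $i$ has $-2$ at position $i-1$ (vacuous if $i=1$), a lone $2$ at position $s+i-1$, and zeros elsewhere; and at the end of block $i$ the state has $-2$ at position $i$, a lone $2$ at position $s+i$ (vacuous if $s+i > t$), and zeros elsewhere. To verify this one traces through a single block: firing $v_{s+i-1}$ creates a local $(2, -2, 2)$ pattern, each subsequent fire at $v_{s+i-m}$ shifts the $(-2, 2)$ portion one step leftward while leaving zeros behind, and the last fire at $v_i$ collides the leftward-moving $2$ with the resting $-2$ at position $i-1$, zeroing both and planting the new $-2$ at position $i$. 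Every fire along this path is legal and no entry ever exceeds $2$, so no bad path is invoked. After block $t-s+1$ the propagating $2$ falls off the right end and the algorithm terminates at the claimed class. The case of an all-zero initial is trivial: it already satisfies the good-path condition.

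For uniqueness of the terminal, I would observe that the Ozsv\'ath-Szab\'o algorithm on a simple linear graph is equivalent to chip-firing on $A_t$ with a sink added at each endpoint: under the change of variable $c_j = (\langle k, v_j\rangle + 2)/2$, a fire at $v_j$ becomes the move $c_j \mapsto c_j - 2$, $c_{j \pm 1} \mapsto c_{j \pm 1} + 1$, and a good terminal is exactly a stable configuration with $c_j \in \{0, 1\}$. The abelian property of chip-firing then guarantees that the stable terminal, when reachable without hitting the bad-path condition, is independent of firing order, so the sequence above identifies the unique good terminal. The main obstacle is the careful bookkeeping of intermediate states within each block, particularly at the boundary indices $i = 1$, $i = t-s+1$, and $s + i > t$.
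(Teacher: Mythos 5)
Your proof is correct, and its computational core is the same as the paper's: both arguments exhibit an explicit firing sequence carrying the lone $+2$ at position $s$ to the lone $-2$ at position $t-s+1$. The paper organizes the computation through four named moves $\mathrm{MV1}$--$\mathrm{MV4}$ (fire $v_s$, sweep the right-hand $2$ to the end of the chain, then translate the resulting $2$/$-2$ pair leftward $s-2$ times and annihilate), whereas you organize it into $t-s+1$ leftward block sweeps; these are just different firing orders realizing good full paths with the same terminal, and your block induction checks out, including the boundary cases $i=1$, $i=t-s+1$, and $s=1,t$. The one substantive difference is your final paragraph: the lemma asserts what the terminal of \emph{any} good full path is, while the algorithm allows an arbitrary choice of vertex at each step, so exhibiting a single path does not by itself pin down all good terminals. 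The paper leaves this implicit; you close the gap by recasting the algorithm as chip-firing on $A_t$ with sinks at the endpoints (via $c_j=(\langle k,v_j\rangle+2)/2$) and invoking the abelian property, under which any stabilizing firing sequence --- in particular any good full path, whose terminal has all $c_j\in\{0,1\}$ --- reaches the unique stable configuration. That observation is a genuine strengthening of the write-up, and it costs nothing since degree is in any case constant along full paths, which is all that is used downstream in Lemma~\ref{max}.
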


\begin{proof}
We start with some simple observations. The Ozsv\'ath-Szab\'o algorithm yields the following equivalences.
\begin{align*}
\mathrm{MV1} \ : \ & (2,0, \ldots, 0) \sim (0,\ldots0,-2), \ (0,0, \ldots, 2) \sim (-2,0, \ldots, 0), \\
\mathrm{MV2} \ : \ & (*, \ldots, *, -2,2,0, \ldots, 0) \sim (*, \ldots ,*, 0,0, \ldots, 0,-2), \\
\mathrm{MV3} \ : \ & (0, \ldots, 0, \here{2}{fromthere}, 0, \ldots,0, \here{-2}{fromhere}, *, \ldots,*)  \sim (0, \ldots, 0, \here{2}{fromtthere}, 0, \ldots,0, \here{-2}{fromttthere}, *, \ldots,*), \\
\begin{tikzpicture}[remember picture, overlay]
\node[font=\scriptsize, below left=8pt of fromthere] (tothere) {$s_0^{th}$};
\draw[Stealth-] ([yshift=-4pt]fromthere.south) |- (tothere);
\end{tikzpicture}
\begin{tikzpicture}[remember picture, overlay]
\node[font=\scriptsize, below right=8pt of fromhere] (tohere) {$t_0^{th}$};
\draw[Stealth-] ([yshift=-4pt]fromhere.south) |- (tohere);
\end{tikzpicture}
\begin{tikzpicture}[remember picture, overlay]
\node[font=\scriptsize, below left=8pt of fromtthere] (totthere) {$(s_0 - 1)^{th}$};
\draw[Stealth-] ([yshift=-4pt]fromtthere.south) |- (totthere);
\end{tikzpicture}
\begin{tikzpicture}[remember picture, overlay]
\node[font=\scriptsize, below right=8pt of fromttthere] (tottthere) {$(t_0 - 1)^{th}$};
\draw[Stealth-] ([yshift=-4pt]fromttthere.south) |- (tottthere);
\end{tikzpicture}
\\
\mathrm{MV4} \ : \ & (2,0, \ldots, 0,-2,*, \ldots, *) \sim (0, \ldots, 0, -2,0,* \ldots,*),
\end{align*} where $0, \ldots, 0$ represents a string of zeros and $*, \ldots, *$ represents an arbitrary string which does not change under the indicated moves.

Here, we give a step by step explanation of the first part in $\mathrm{MV1}$ and leave the verification of the rest of the moves to the reader as an exercise. At each step, we add the twice the Poincar\'e dual of the vertex at which the evaluation is $+2$. After this, each $+2$ turns into a $-2$ and evaluation at all the neighbouring vertices are increased by $2$. Hence we add all the vertices in order from left to right.
\begin{align*}
& (2,0,0 \ldots, 0) \sim (-2,2,0,\ldots0) \sim (0,-2,2, \ldots, 0) \\
& \sim \ldots \sim (0,0,0, \ldots, 0,-2,2) \sim (0,\ldots, 0,-2).
\end{align*}

Let $k$ be an initial characteristic cohomology class of $A_t$. Then $\langle k,v_i \rangle \in \{0,2\}$ for any $i$. By Lemma~\ref{simple}, $k$ is either of the form $(0,0,\ldots,0)$ or of the form $(0,\ldots,0,2,0,\ldots,0)$, where $2$ is the $s^{th}$-entry. For the former form of $k$, the terminal cohomology class is equal to $(0,0,\ldots,0)$. For the latter one, the proof follows from the move $\mathrm{MV1}$ if $s=0$ or $s=t$. Assume otherwise. Then we obtain
\begin{align*}
(0, \ldots \ldots,0,\here{2}{fromone},0,\ldots \ldots ,0) & \sim (0, \ldots, 0,\here{2}{fromtwo},\here{-2}{fromthree},2,0,\ldots,0) \ \ \ \ \ \ \ \ \ [\mathrm{Add} \ 2\mathrm{PD}(v_s)] \\
\begin{tikzpicture}[remember picture, overlay]
\node[font=\scriptsize, below right=8pt of fromone] (tohere) {$s^{th}$};
\draw[Stealth-] ([yshift=-4pt]fromone.south) |- (tohere);
\end{tikzpicture}
\begin{tikzpicture}[remember picture, overlay]
\node[font=\scriptsize, below left=8pt of fromtwo] (tohere) {$(s-1)^{th}$};
\draw[Stealth-] ([yshift=-4pt]fromtwo.south) |- (tohere);
\end{tikzpicture}
\begin{tikzpicture}[remember picture, overlay]
\node[font=\scriptsize, below right=8pt of fromthree] (tothree) {$s^{th}$};
\draw[Stealth-] ([yshift=-4pt]fromthree.south) |- (tothree);
\end{tikzpicture}
\\
& \sim (0, \ldots,0,\here{2}{fromfour},0,\ldots,0,\here{-2}{fromfive}) \ \ \ \ \ \ \ \ \ \ \ \ [\mathrm{MV2}] \\
\begin{tikzpicture}[remember picture, overlay]
\node[font=\scriptsize, below left=8pt of fromfour] (tohere) {$(s-1)^{th}$};
\draw[Stealth-] ([yshift=-4pt]fromfour.south) |- (tohere);
\end{tikzpicture}
\begin{tikzpicture}[remember picture, overlay]
\node[font=\scriptsize, below right=8pt of fromfive] (tohere) {$t^{th}$};
\draw[Stealth-] ([yshift=-4pt]fromfive.south) |- (tohere);
\end{tikzpicture}
\\
& \sim (2,0,\ldots,0,\here{-2}{fromsix},0,\ldots,0) \ \ \ \ \ \ \ \ \ \ \ \ [(s-2)-\mathrm{times} \ \mathrm{MV3}] \\
\begin{tikzpicture}[remember picture, overlay]
\node[font=\scriptsize, below right=8pt of fromsix] (tohere) {$(t-s+2)^{th}$};
\draw[Stealth-] ([yshift=-4pt]fromsix.south) |- (tohere);
\end{tikzpicture}
\\
& \sim (0,\ldots,0,\here{-2}{fromseven},0,\ldots,0) \ \ \ \ \ \ \ \ \ \ \ \ \ \ \ [\mathrm{MV4}] \\
\begin{tikzpicture}[remember picture, overlay]
\node[font=\scriptsize, below right=8pt of fromseven] (tohere) {$(t-s+1)^{th}$};
\draw[Stealth-] ([yshift=-4pt]fromseven.south) |- (tohere);
\end{tikzpicture} 
\end{align*}
\end{proof}

\subsection{Almost Simple Linear Graphs}
 
\begin{proposition}
\label{Plumb}
Let $p,q$ and $r$ be pairwise relatively prime, ordered, positive integers satisfying \eqref{pqr}. Then Brieskorn sphere $Y=\Sigma(p,q,r)$ is the boundary of the plumbing graph shown in Figure~ \ref{fig:plumb}.
\end{proposition}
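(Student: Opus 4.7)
The plan is to exhibit an explicit solution $(e_0, p', q', r')$ to the Diophantine equation relating the weights to $p, q, r$ and then to compute the three continued fractions prescribed in Section~\ref{Plumbings}; by the uniqueness statement accompanying that equation, this will pin down all the plumbing weights. My guess is
\[
e_0 = -2, \quad p' = 1, \quad q' = q-1, \quad r' = r-1.
\]
The first step is to verify these values do solve the Diophantine equation. A direct expansion gives
\[
-2pqr + qr + p(q-1)r + pq(r-1) \;=\; qr - pr - pq \;=\; -(pq + pr - qr) \;=\; -1,
\]
using the hypothesis~\eqref{pqr}. The admissibility conditions $1 \leq p' \leq p-1$, $1 \leq q' \leq q-1$, $1 \leq r' \leq r-1$ reduce to $p, q, r \geq 2$, which is automatic: for instance, $p = 1$ would force $(q-1)(r-1) = 0$, collapsing the Brieskorn sphere.

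The next step is to read off the three arms from the continued fraction expansions. The $p$-arm is immediate: $p/1 = [p]$, a single vertex of weight $-p$. For the other two arms I would invoke the standard identity
\[
\frac{n+1}{n} = [\,\underbrace{2, 2, \ldots, 2}_{n}\,],
\]
proved by a one-line induction on $n$: the base $n=1$ is $[2]=2$, and the inductive step is $2 - 1/\bigl((n+1)/n\bigr) = (n+2)/(n+1)$. Applying this to $q/(q-1)$ and $r/(r-1)$ produces arms of $q-1$ and $r-1$ vertices respectively, each of weight $-2$.

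Assembling the star-shaped graph, the central vertex has weight $-2$ and is joined to three arms: a single vertex of weight $-p$, a length-$(q-1)$ arm of $-2$'s, and a length-$(r-1)$ arm of $-2$'s. Straightening the central vertex together with the two $-2$-arms into a single linear chain yields $q+r-1$ collinear vertices of weight $-2$ with the extra $-p$-vertex attached at the $q$-th node; this is exactly the ASL-graph of Figure~\ref{fig:plumb}. The only mildly technical ingredient is the continued-fraction identity, but it is entirely routine, so I anticipate no serious obstacle.
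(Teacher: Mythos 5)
Your proposal is correct and follows essentially the same route as the paper: identify $(e_0,p',q',r')=(-2,1,q-1,r-1)$ as the solution of the Diophantine equation (the paper derives this by reducing mod $p$, $q$, $r$, while you verify it directly and invoke uniqueness — an immaterial difference), then expand $p/1=[p]$ and $q/(q-1)$, $r/(r-1)$ into chains of $-2$'s. The verification $-2pqr+qr+p(q-1)r+pq(r-1)=-(pq+pr-qr)=-1$ and the continued-fraction identity are exactly as needed.
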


Notice that removing a single vertex from the plumbing graph in Figure~\ref{fig:plumb} leads to a simple linear graph. We call such graphs \emph{almost simple linear}.

\begin{proof}[Proof of Proposition~\ref{Plumb}]
To find this plumbing graph, we follow Section~\ref{Plumbings}. Given $p,q$ and $r$, we first investigate unique integers $e_0,p',q',r'$ solving the Diophantine equation
\begin{equation}
\label{0}
e_0pqr+p'qr+pq'r+pqr'=-1
\end{equation} 
where $1\leq p' \leq p-1$, $1\leq q' \leq q-1$ and $1\leq r' \leq r-1$.

Taking $\mathrm{mod}\ p$, $\mathrm{mod} \ q$ and $\mathrm{mod} \ r$ reductions of both sides Equation \eqref{pqr} and \eqref{0} respectively, one can immediately see that $e_0=-2$, $p'=1$, $q'=q-1$ and $r'=r-1$. 

Hence we find continued fraction expansions $\frac{p}{p'}$, $\frac{q}{q'}$ and $\frac{r}{r'}$ describing weights of branches of plumbing graph as follows.
\begin{align*}
\frac{p}{1}&=[p], \\
\frac{q}{q-1}&=[\underbrace{2:2:\dots:2}_{q-1}],\text{ and}\\
\frac{r}{r-1}&=[\underbrace{2:2:\dots:2}_{r-1}].
\end{align*}
\end{proof}

\begin{remark}
\label{triplet}
There are many different triplets $(p,q,r)$ satisfying the identity \eqref{pqr}.  Indeed, if we choose $s$ to be any positive divisor of $p^2-1$, then the triplet $(p,p+s,p+(p^2-1)/s)$ solves \eqref{pqr}. For example, all of the following triplet satisfy \eqref{pqr}
\begin{itemize}
\item $(p,q,r)=(2n+1,4n+1,4n+3)$ for $n \geq 1$,
\item $(p,q,r)=(2n+1,3n+2,6n+1)$ for $n \geq 1$,
\item $(p,q,r)=(2n+1,3n+1,6n+5)$ for $n \geq 1$,
\item $(p,q,r)=(4n+3,5n+4,20n+11)$ for $n \geq 1$,
\item $(p,q,r)=(p,p+1,p^2 + p-1)$ for $p \geq 2$,
\item $(p,q,r)=(F_{2n+1},F_{2n+2},F_{2n+3})$, where $F_*$ is $*^{th}$ Fibonacci number.
\end{itemize}
\end{remark}

\begin{figure}[ht]  \begin{center}
		\includegraphics[width=0.5\textwidth]{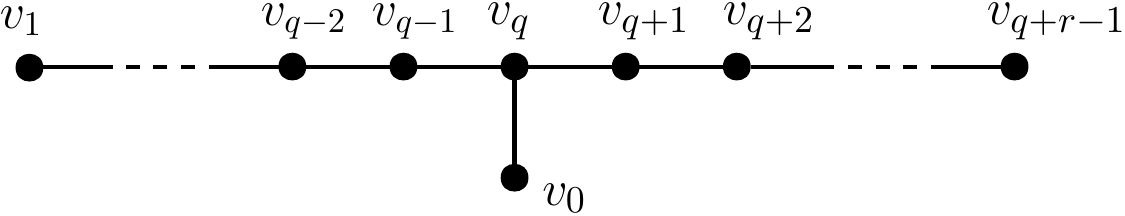}       
		\caption{The labelling of vertices}
\label{fig:decoration}
	\end{center}
\end{figure}

\subsection{Computation of Ozsv\'ath-Szab\'o $d$-invariant}
Throughout this subsection, $G$ denotes the almost simple linear plumbing graph in Figure~\ref{fig:plumb} and $Y=\Sigma(p,q,r)$ is the Brieskorn sphere described in Proposition~\ref{Plumb}. We label the vertices of plumbing graph $G$ as shown in Figure~\ref{fig:decoration}. Note that $|G|=q+r$.

\begin{proposition}
\label{deven}
For an even $p$, we have that $d(Y)=(q+r)/4.$ 
\end{proposition}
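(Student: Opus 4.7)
The plan is to identify a single distinguished characteristic cohomology class that achieves the maximum in Theorem~\ref{correctionterm}. The key parity observation is that when $p$ is even, every weight on the plumbing graph $G$ is even: the central vertex has weight $-2$, the isolated branch vertex has weight $-p$, and all vertices along the two long branches have weight $-2$. Hence the characteristic condition $\langle k, v_j\rangle + e_j \equiv 0 \pmod{2}$ admits $k = 0$ as a solution.

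First I would verify that $k=0$ lies in the constrained set $\mathfrak{C}$. Since every vertex of $G$ satisfies $e_j \leq -2$, the inequalities $e_j + 2 \leq 0 \leq -e_j$ hold trivially, placing $k=0$ in $\mathfrak{C}$. In fact $k=0$ is already terminal for the Ozsv\'ath--Szab\'o algorithm: the condition $\langle k, v_j\rangle = -e_j$ needed to advance the algorithm fails at every vertex, while the good-path inequality $e_j \leq 0 \leq -e_j + 2$ holds everywhere. So $0$ also lies in $\mathfrak{C}'$ as the terminal class of a trivial (length zero) good full path. Theorem~\ref{correctionterm} then yields the lower bound
\[
d(Y) \geq \frac{0^2 + |G|}{4} = \frac{q+r}{4}.
\]

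For the reverse inequality I would invoke unimodularity and negative-definiteness. Unimodularity of $G$ identifies $\mathcal{L}$ with $\mathcal{L}'$, so every characteristic class is an integral vector in $\mathcal{L}$; negative-definiteness of $I$ forces $I^{-1}$ to be negative-definite as well, so $k^2 = k^t I^{-1} k \leq 0$ for every $k$, with equality if and only if $k=0$. Combining with the lower bound gives $\max_{k \in \mathrm{Char}(G)} k^2 = 0$, and Theorem~\ref{correctionterm} completes the calculation.

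There is essentially no obstacle here: the whole argument hinges on the parity observation that $p$ even makes every plumbing weight even, which in turn makes the zero class characteristic and of square zero. The odd $p$ case treated in Theorem~\ref{theo:latt} is genuinely more delicate precisely because the weight $-p$ on the isolated branch vertex becomes odd, forcing every characteristic class to carry a nonzero contribution at that vertex and thereby preventing $k^2$ from reaching $0$.
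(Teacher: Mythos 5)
Your proof is correct and follows essentially the same route as the paper: the parity of the weights when $p$ is even makes $k=0$ characteristic, and negative-definiteness shows it maximizes $k^2$, so Theorem~\ref{correctionterm} gives $d(Y)=(q+r)/4$. The extra verification that $0\in\mathfrak{C}'$ is a harmless elaboration of the same argument.
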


\begin{proof}
When $p$ is even, $X$ has even intersection form and thus $k=0$ is a characteristic cohomology class. Since the intersection form is negative definite, $k=0$ is clearly maximizes the expression \eqref{corrterm}. Hence, $d(Y)=(q+r)/4$ by Theorem~\ref{correctionterm}.  
\end{proof}

From now on, we assume that $p$ is odd. For $a \in \mathcal{A} = \{ -p, -p+2, \ldots, p-2, p \}$ and $m \in  \{0,1, \ldots ,(p-1)/2 \}$, define $k_{a,m}$ to be the characteristic cohomology class indicated in Figure~\ref{fig:charvector}, where the decoration of each vertex $v_i$ indicates the number $\langle k_{a,m},v_i \rangle$. Note that $k_{a,0}$ does not include a $-2$.  

\begin{lemma}
\label{max}
For $Y$, the maximum of the formula \eqref{corrterm} is achieved among characteristic cohomology classes $k_{a,m}$. 
\end{lemma}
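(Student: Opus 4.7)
The plan is to combine Theorem~\ref{correctionterm} with the simple-linear-graph analysis of Section~\ref{simpleg}, exploiting the fact that removing $v_0$ from $G$ leaves a simple linear subgraph. By Theorem~\ref{correctionterm}, we may compute $d(Y)$ by maximizing $(k^2 + |G|)/4$ over initial characteristic classes $k \in \mathfrak{C}$ which support a good full path. Since the Ozsv\'ath-Szab\'o moves preserve the square $k^2$, we are free to replace $k$ by any class equivalent to it under the algorithm, which gives us room to normalize.

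First I would pin down the possible values of such a $k$. The adjunction inequality \eqref{adjunction} combined with the characteristic condition forces $\langle k, v\rangle \in \{0,2\}$ at every $-2$ vertex and $\langle k, v_0\rangle \in \{-p+2, -p+4, \ldots, p\}$ at the $-p$ vertex $v_0$. Let $A$ denote the simple linear subgraph on $q+r-1$ vertices obtained by removing $v_0$. I would apply Lemma~\ref{simple} to the restriction $k|_A$: if $k|_A$ has two or more $+2$ entries, then $k|_A$ supports no good full path in $A$, and by the hereditary principle (Remark~\ref{badhereditary}) this prevents $k$ from supporting a good full path in $G$, contradicting our standing assumption. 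Hence at most one vertex of $A$ carries the value $+2$ in $k$, so $k$ is parametrized by the pair $(a, j)$ where $a = \langle k, v_0\rangle$ and $j$ is the position of the lone $+2$ on $A$ (with $j=0$ meaning none is present).

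Next I would use the moves from the proof of Lemma~\ref{simple2} (MV1--MV4) to transport the lone $+2$ along $A$ toward a canonical position, interleaving moves at $v_0$ when $\langle k, v_0\rangle$ reaches $p$. The key bookkeeping fact is that a move at a $-2$ vertex adjacent to $v_0$ alters $\langle k, v_0\rangle$ by $+2$, whereas a move at $v_0$ alters $\langle k, v_0\rangle$ by $-2p$ and increments its neighbor by $+2$. By iterating this transport and tracking the compensations, any admissible $(a, j)$ can be brought to a class of the form $k_{a',m}$ as indicated in Figure~\ref{fig:charvector}, with $(a', m)$ lying in $\mathfrak{L}$. The upper bound $m \leq (p-1)/2$ reflects precisely the parity and range constraints from the adjunction inequality on $v_0$: once $m$ grows past this bound, a compensating $v_0$-move is forced, which can be absorbed into the parametrization by shifting $a'$.

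The main obstacle, and where the real work lies, is the combinatorial bookkeeping that matches each admissible $(a,j)$ to some $(a', m) \in \mathfrak{L}$ while verifying that the reduction remains inside the class of good full paths at every step. Here the symmetry between the two arms of $A$ (of lengths $q-1$ and $r-1$) helps us argue without loss of generality by placing the lone $+2$ in a preferred arm, and the explicit form of the MV-moves shows that the transport across the central vertex adjacent to $v_0$ is the only delicate step, since it is the unique place where the $A$-dynamics and the $v_0$-dynamics interact.
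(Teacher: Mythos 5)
Your reduction to initial classes with at most one $+2$ on the simple linear part is exactly the paper's first step and is carried out correctly: adjunction plus the characteristic condition pin down the values, Lemma~\ref{simple} applied to the chain $A_{q+r-1}$ obtained by deleting $v_0$ rules out two or more $+2$ entries, and Remark~\ref{badhereditary} promotes the bad path to $G$. The problem is everything after that. The entire content of the lemma is the identification of the degrees of the surviving classes with values $f(a,m)$ for $(a,m)$ among the $k_{a,m}$, and your proposal asserts this (``any admissible $(a,j)$ can be brought to a class of the form $k_{a',m}$ with $(a',m)$ lying in $\mathfrak{L}$'') while explicitly deferring its verification as ``the main obstacle, where the real work lies.'' That is the lemma itself, not a detail. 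Moreover, as literally stated the assertion is false: some admissible initial classes support only bad full paths --- the paper notes this for $\tilde{k}_{a,m}$ when the value at $v_0$ overshoots --- and those classes are simply discarded from the maximum rather than transported to any $k_{a',m}$. A complete argument must distinguish the two outcomes, which your sketch does not do.

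You also miss the device that makes the paper's proof short and avoids most of the bookkeeping you are worried about. Since $\mathrm{deg}(k)=\mathrm{deg}(-k)$ (recorded in Section~\ref{Plumbings}), the initial classes whose lone $+2$ sits on one of the two arms are precisely the negatives $-k_{a,m}$ of the target classes, so for that entire family no transport is needed: $\mathrm{deg}(-k_{a,m})=\mathrm{deg}(k_{a,m})$ on the nose. Only the family $\tilde{k}_{a,m}$ with the $+2$ on the other arm requires running the moves of Lemma~\ref{simple2}, and there the paper records the net effect in closed form, $\tilde{k}_{a,m}\sim k_{-a+2m,m}$, together with the criterion for when that path turns bad. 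Your all-transport plan, with $v_0$-moves interleaved on both arms, is workable in principle, but without either the $k\mapsto -k$ symmetry or an explicit computation of the net change in $\langle k,v_0\rangle$ and of the bad-path threshold, the claimed parametrization by $\mathfrak{L}$ is not established.
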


\begin{figure}[ht]  \begin{center}
		\includegraphics[width=0.6\textwidth]{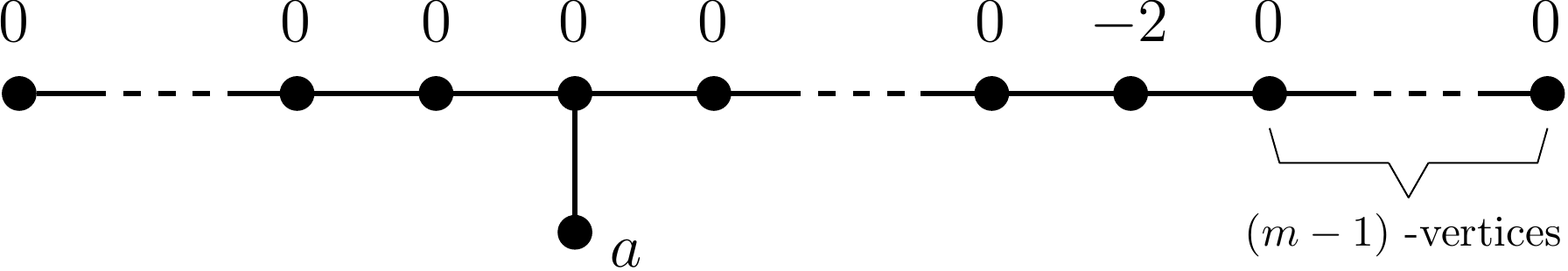}       
		\caption{Characteristic cohomology classes $k_{a,m}.$}
\label{fig:charvector}
	\end{center}
\end{figure}

\begin{proof}
Let $k$ be a initial characteristic cohomology class of the good full path for the plumbing graph $G$. Then we have $$\langle k,v_0 \rangle \in \mathcal{A} \ \text{and} \ \langle k,v_i \rangle \in \{0,2\}$$ for each $i \neq 0$.

Due to Lemma~\ref{simple2} if $k$ admits a good full path, then it is either of the form $-k_{a,m}$ where $a \in \mathcal{A}$ and $m \in  \{0,1, \ldots ,r-2, r-1 \}$; or of the form $\tilde{k}_{a,m}$ where $a \in \mathcal{A}$ and $m \in  \{0,1, \ldots , q-1 \}$ respectively; see Figure~\ref{fig:charvector} and Figure~\ref{fig:charvector2}.  

\begin{figure}[ht]  \begin{center}
		\includegraphics[width=0.6\textwidth]{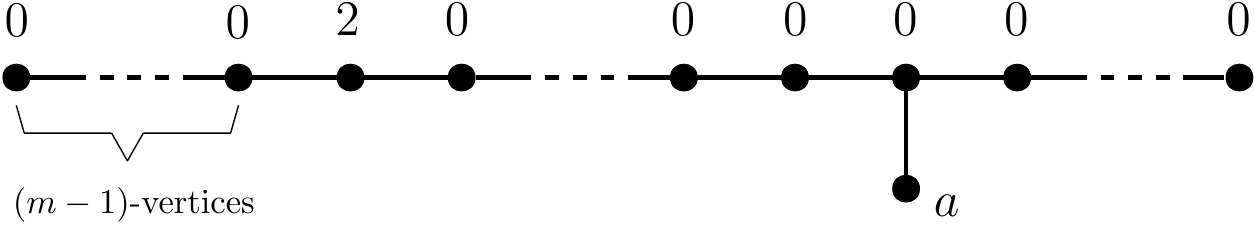}       
		\caption{Characteristic cohomology classes $\tilde{k}_{a,m}.$}
\label{fig:charvector2}
	\end{center}
\end{figure}

If $k = -k_{a,m}$, then $\mathrm{deg}(k) = \mathrm{deg}(-k_{a,m}) = \mathrm{deg}(k_{a,m})$. If $k=\tilde{k}_{a,m}$, then apply the moves at the end of the proof of Lemma~\ref{simple2} to see $k=\tilde{k}_{a,m} \sim k_{-a+2m, m}$. If $a+2m \geq p$, then the full path starting from $k$ is bad. Otherwise, it is a good one and we have $\mathrm{deg}(k) = \mathrm{deg}(\tilde{k}_{a,m}) = \mathrm{deg}(k_{-a+2m,m})$. Therefore, the maximum of~\eqref{corrterm} is achieved among characteristic cohomology classes $k_{a,m}$.
\end{proof}

\begin{lemma} 
\label{max2} 
We have the formula
\begin{equation}
\label{fan}
k^2_{a,m}= -(q+r)a^2 +4aqm -4(q-p)m^2 -4m.
\end{equation}

\end{lemma}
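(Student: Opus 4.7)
The plan is to compute $k^2_{a,m}$ directly from the defining relation $k^2 = b^t I^{-1} b$, where $b$ is the column vector of pairings $\langle k_{a,m}, v \rangle$ running over the vertices $v$ of $G$. Reading off Figure~\ref{fig:charvector}, the vector $b$ has at most two nonzero entries: the value $a$ at the $-p$-weighted vertex $v_0$, and, when $m \geq 1$, the value $-2$ at a single vertex $\alpha_m$ located $m$ steps from the far end of the $r$-arm. This reduces $k^2_{a,m}$ to the three-term expression
\[
k^2_{a,m} = a^2 (I^{-1})_{v_0 v_0} - 4a (I^{-1})_{v_0 \alpha_m} + 4 (I^{-1})_{\alpha_m \alpha_m},
\]
with the last two terms vanishing when $m = 0$.

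Each entry of $I^{-1}$ I would evaluate via the N\'emethi-Nicolaescu formula $(I^{-1})_{vw} = -|\det(I_{(vw)})/\det(I)|$, using unimodularity $|\det(I)| = 1$. The complement of $\{v_0\}$ in $G$ is a simple linear chain of $q+r-1$ vertices weighted $-2$, so item (1) of the determinant recipe in Section~\ref{Plumbings} gives $(I^{-1})_{v_0 v_0} = -(q+r)$. Removing the unique minimal path from $v_0$ to $\alpha_m$ produces a disjoint union of two simple chains of lengths $q-1$ and $m-1$, and items (1) and (2) together give $(I^{-1})_{v_0 \alpha_m} = -qm$.

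The only nonroutine piece, and what I expect to be the main obstacle, is $(I^{-1})_{\alpha_m \alpha_m}$. Removing $\alpha_m$ disconnects $G$ into a simple chain of $m-1$ $-2$-vertices (contributing a factor $m$) and a star-shaped subgraph with central weight $-2$ and three arms of continued-fraction data $p/1$, $q/(q-1)$, and $(r-m)/(r-m-1)$. Item (3) identifies its determinant with the numerator of
\[
-2 + \tfrac{1}{p} + \tfrac{q-1}{q} + \tfrac{r-m-1}{r-m} = \tfrac{1}{p} - \tfrac{1}{q} - \tfrac{1}{r-m}.
\]
The key identity collapsing this numerator is the Brieskorn relation $pq+pr-qr=1$, rewritten as $(q-p)r = pq-1$, which reduces the numerator over the common denominator $pq(r-m)$ to $-(1+(q-p)m)$. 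Consequently $(I^{-1})_{\alpha_m \alpha_m} = -m(1+(q-p)m) = -(q-p)m^2 - m$, and substituting the three entries into the three-term formula yields exactly $k^2_{a,m} = -(q+r)a^2 + 4aqm - 4(q-p)m^2 - 4m$, as claimed.
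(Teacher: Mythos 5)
Your proposal is correct and follows essentially the same route as the paper: the same three-term reduction of $k^2_{a,m}$ to the entries $(I^{-1})_{v_0v_0}$, $(I^{-1})_{v_0\alpha_m}$, $(I^{-1})_{\alpha_m\alpha_m}$, the same N\'emethi--Nicolaescu path-deletion computation of each entry, and the same use of the relation $pq+pr-qr=1$ to collapse the star-shaped determinant to $m(q-p)+1$. No substantive differences.
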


\begin{proof}
We will compute $k^2_{a,m}$ as in the end of Section~\ref{Plumbings}. Thanks to the fact that most of the decoration in $k_{a,m}$ are zeros, we do not need to describe all the entries of $I^{-1}$. 

Note that $\langle k_{a,m}, v_0 \rangle = a$ and $\langle k_{a,m}, v_{q+r-m-1} \rangle = -2$. Then
$$k^2_{a,m}= a^2I^{-1}_{v_0v_0} -4a I^{-1}_{v_0v_{q+r-m-1}} +4I^{-1}_{v_{q+r-m-1}v_{q+r-m-1}}.$$

Following Section~\ref{Plumbings}, we will find entries of $I^{-1}$. Let $p_{v_0v_0}$, $p_{v_0v_{q+r-m-1}}$ and $p_{v_{q+r-m-1}v_{q+r-m-1}}$ be unique minimal paths in $G$ connecting $v_0$ to itself, $v_0$ to $v_{q+r-m-1}$, and $v_{q+r-m-1}$ to itself respectively. Deleting $p_{v_0v_0}$ in $G$, we get that $I^{-1}_{v_0v_0}= -(q+r)$. Also deleting $p_{v_0v_{q+r-m-1}}$ in $G$, we similarly obtain $I^{-1}_{v_0v_{q+r-m-1}}=-qm$. Finally, $I^{-1}_{v_{q+r-m-1}v_{q+r-m-1}}$ is the minus of product of $-m$ and the one which is equal to the numerator of the following quantity.

\begin{align*}
-2+\frac{1}{p}+\frac{q-1}{q}+\frac{r-m-1}{r-m}= - \frac{m(q-p)+1}{pq(r-m)}. 
\end{align*}
Here, we used the fact that $p,q$ and $r$ satisfy Equation \eqref{pqr}. Thus, $I^{-1}_{v_{q+r-m-1}v_{q+r-m-1}}=-(q-p)m^2-m$. Hence, Equation \eqref{fan} holds. 
\end{proof}

\begin{proof}[Proof of Theorem \ref{theo:latt}]
Apply Ozsv\'ath-Szab\'o $d$-invariant formula in Theorem \ref{correctionterm} to $\Sigma(p,q,r)$. We showed in Lemma \ref{max} that the maximum degree of the characteristic classes  is achieved among $k_{a,m}$ where $(a,m)$ runs through the lattice points. In Lemma \ref{max2} we proved that $k_{a,m}^2=f(a,m)$ for all $(a,m)\in \mathfrak{L}$, hence the first equation in the statement is true. The second equation trivially follows from the first one.
\end{proof}

\subsection{A closer look at $\mathfrak{L}\cap R$}
It is now natural to ask which lattice points, besides $(1,1)$, are contained in the region $R$. To address this question we look for some simpler descriptions of $R$. Define 
\begin{align}
\nonumber g(x,y)&=f(1,1)-f(x,y)\\
&=(q+r)x^2-4qxy+4(q-p)y^2+4y-(q+r)+4(p-1) .
\end{align} 
\noindent Recall that $R=\{(x,y)\,:\, -p\leq x\leq p,\; 0\leq y \leq (p-1)/2,\; g(x,y)\leq 0 \}$.  Fix $y\in[0,(p-1)/2]$, then $g(x,y)$ is quadratic in $x$ with positive leading term. Therefore $g(x,y)\leq 0$ if and only if $\Delta(y)\geq 0$ and 
$$x \in \mathcal{I}_y = \left [ \frac{4qy-\sqrt{\Delta (y)}}{2(q+r)}, \frac{4qy+\sqrt{\Delta (y)}}{2(q+r)}\right ] $$  
\noindent where
\begin{align}
\nonumber \Delta(y)&=(-4qy)^2-4(q+r)(4(q-p)y^2+4y-(q+r+4(p-1)))\\
&=4[2y-(q+r)]^2-16(q+r)(p-1).
\end{align}
Notice that $\Delta'(y)<0$, so $\Delta(y)$ is a decreasing function of $y$. We would like to decide whether the interval $\mathcal{I}_m$ contains an odd integer for $m\in \{0,1,\dots,(p-1)/2\}$. Define
\begin{align}
\mathfrak{c}(m) = \frac{2qm}{q+r}\text{ and }
\mathfrak{r}(m) = \frac{\sqrt{\Delta(m)}}{2(q+r)} = \sqrt{\left ( \frac{2m}{q+r} -1 \right )^2 - \frac{4(p-1)}{q+r}}
\end{align}
so that the interval $\mathcal{I}_m$ has center $\mathfrak{c}(m)$ and radius $\mathfrak{r}(m)$. Hence we see that a lattice point $(a,m)$ is contained in $R$ if and only if $\Delta(m)\geq 0$ and  $|a-\mathfrak{c}(m)|\leq \mathfrak{r}(m)$. Notice that $\mathfrak{r}(0)<1$, so $\mathfrak{r}(m)<1$ for every $m$. Therefore, $R$ contains no lattice point in the slices $m=0,1$ except the point $(1,1)$. Moreover for every fixed $y$, the function $f(x,y)$ is quadratic in $x$ with negative leading term, and the critical point is $x=2qy/(q+r)$.  Hence on each horizontal slice $y=m$, the maximum of $f(a,m)$ is at those odd integers $a$ which are closest to $\mathfrak{c}(m)$.  Let $\mathfrak{d}(m)$ denote the distance from $\mathfrak{c}(m)$ to the closest odd integer $a$ so that $\mathfrak{d}(m)\leq \mathfrak{r}(m)$. We summarize all these observations in the following proposition.
\begin{proposition}
\label{lattice}
A lattice point $(a,m)$ is contained in $R$ if and only if $(a,m)=(1,1)$ or all of the following conditions are satisfied
\begin{enumerate}
\item $m\geq 2$,
\item $\Delta(m)\geq 0$,
\item $\mathfrak{d}(m)\leq \mathfrak{r}(m)$,
\item $a$ is the closest odd integer to $\mathfrak{c}(m)$. 
\end{enumerate} 
\end{proposition}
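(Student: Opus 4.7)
The plan is to reduce ``$(a,m)\in R$'' to the single-variable inequality $g(a,m)\le 0$ and exploit that on each horizontal slice $y=m$ the sublevel set of $g(\cdot,m)$ is precisely the interval $\mathcal{I}_m=[\mathfrak{c}(m)-\mathfrak{r}(m),\mathfrak{c}(m)+\mathfrak{r}(m)]$ (when non-empty). All of the heavy lifting has already been done in the paragraph preceding the statement, so what remains is essentially a bookkeeping translation, which I would prove by handling each direction of the ``iff'' separately.

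For the backward direction, $(1,1)\in R$ is immediate from $g(1,1)=0$. If conditions (1)--(4) hold instead, then (2) makes $\mathcal{I}_m$ a bona fide closed interval with center $\mathfrak{c}(m)$ and radius $\mathfrak{r}(m)$, and (3)--(4) say exactly that the odd integer $a$ lies within $\mathfrak{r}(m)$ of $\mathfrak{c}(m)$; hence $a\in \mathcal{I}_m$, i.e., $g(a,m)\le 0$. The range restrictions $-p\le a\le p$ and $0\le m\le (p-1)/2$ come for free from $(a,m)\in \mathfrak{L}$, so $(a,m)\in R$.

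For the forward direction, I would start with $(a,m)\in \mathfrak{L}\cap R$ and $(a,m)\neq (1,1)$. Condition (1) follows directly from the author's observation that $R$ contains no other lattice point on the slices $m\in\{0,1\}$. From $g(a,m)\le 0$ the quadratic $g(\cdot,m)$ (with positive leading coefficient) takes a non-positive value, forcing $\Delta(m)\ge 0$ and giving (2). The crux is then (3) and (4), which I would extract from the inequality $\mathfrak{r}(m)<1$ noted above: the interval $\mathcal{I}_m$ has length strictly less than $2$, whereas consecutive odd integers differ by exactly $2$; since $p$ is odd, every $a\in\{-p,-p+2,\dots,p\}$ is odd, so $\mathcal{I}_m$ contains at most one such $a$. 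Consequently the unique $a\in \mathcal{I}_m$ must be the closest odd integer to the center $\mathfrak{c}(m)$ (giving (4)), and then $\mathfrak{d}(m)=|a-\mathfrak{c}(m)|\le \mathfrak{r}(m)$ (giving (3)).

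The only subtlety worth flagging is the well-definedness of ``the'' closest odd integer in (4): a tie occurs exactly when $\mathfrak{c}(m)$ is an even integer, in which case the two candidates both sit at distance $1>\mathfrak{r}(m)$, so neither lies in $\mathcal{I}_m$ and condition (3) is vacuously false. Thus no ambiguity arises under the hypotheses of the proposition, and the argument requires no further case analysis beyond what is listed above.
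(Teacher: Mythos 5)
Your proof is correct and follows essentially the same route as the paper, which states the proposition as a summary of exactly these observations (the slice-by-slice reduction of $g(\cdot,m)\le 0$ to membership in the interval $\mathcal{I}_m$, the bound $\mathfrak{r}(m)<1$ forcing at most one odd integer per slice, and the emptiness of the slices $m=0,1$ apart from $(1,1)$). Your closing remark that a tie for the closest odd integer occurs only when $\mathfrak{c}(m)$ is an even integer, in which case condition (3) fails anyway, is a small clarification the paper leaves implicit.
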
	

\section{Proof of Theorem~\ref{theo:d-inv}}
\label{calculation}
\subsection*{Case 1} Take $p=2n+1$, $q=4n+1$, $r=4n+3$ and observe that $\Delta(m) = 16(m^2 - 8mn -4m +8n +4)$ which is negative when $m$ is between $4n+2 \mp \sqrt{(1+ \frac{1}{2n})}$. The smaller root satisfies $4n+2 - \sqrt{(1+ \frac{1}{2n})} < 2$ so $\Delta(m) <0$ for all $m\geq 2$. By Proposition~\ref{lattice} and Theorem~\ref{theo:latt}, we have $$d(\Sigma(2n+1,4n+1,4n+3)) = \frac{f(1,1) + q+r}{4} = \frac{-4 + 8n+4}{4} = 2n.$$

\begin{remark}
Case 1 can be also proved by using Elkies' theorem stated in Remark~\ref{Elk}; namely, $k^2 = 4 \mod 8$. By negative-definiteness the largest possible value for $k^2$ is $-4$ which is realized by $k_{1,1}$.
\end{remark}

\subsection*{Case 2} Take $p=2n+1$, $q=3n+2$, $r=6n+1$. For $m \in \{ 0,1,\ldots, n \}$, it is easy to verify $$\mathfrak{c}(m) = \left (1- \frac{1}{3n+1} \right ) \frac{2}{3}m$$ so that
$$\mathfrak{d}(m) = \begin{cases} 
      1- \frac{2m}{9n+3}, & \text{if} \ m = 0 \mod 3 , \\
      \frac{1}{3} - \frac{2m}{9n+3}, & \text{if} \ m = 1 \mod 3, \\
      \frac{1}{3} + \frac{2m}{9n+3}, & \text{if} \ m = 2 \mod 3, 
   \end{cases}$$ and $$\mathfrak{r}(m)^2 = \left (\frac{2m}{9n+3} -1 \right )^2 - \frac{8n}{9n+3}.$$
   
Set $F(m) = \left ( \frac{1}{3} - \frac{2m}{9n+3} \right )^2 - \mathfrak{r}(m)^2$ so that $\mathfrak{d}(m)^2 - \mathfrak{r}(m)^2 \geq F(m) = \frac{8m-8}{3(9n+3)}.$ The function $F$ is linear in $m$ and has root at $m=1$ so it satisfies $F(m)\geq 0$ for all $m\geq 1$, implying in particular that $\mathfrak{d}(m) >\mathfrak{r}(m)$ holds for every $m \geq 2$.  

Note that $f(1,1)= -n-3$. By Proposition~\ref{lattice} and Theorem~\ref{theo:latt}, we have $$d(\Sigma(2n+1,3n+2,6n+1)) = \frac{f(1,1) + q+r}{4} = \frac{-n-3 + 9n+3}{4} = 2n.$$

\subsection*{Case 3} Take $p=2n+1$, $q=3n+1$, $r=6n+5$. Then for $m \in \{ 0,1,\ldots, n \}$ the quantities $\mathfrak{c}(m)$, $\mathfrak{d}(m)$ and $\mathfrak{r}(m)$ are easily found as follows. $$\mathfrak{c}(m) = \left (1- \frac{1}{3n+2} \right ) \frac{2}{3}m$$ so that $$\mathfrak{d}(m) = \begin{cases} 
      1 + \frac{2m}{9n+6}, & \text{if} \ m = 0 \mod 3 , \\
      \frac{1}{3} + \frac{2m}{9n+6}, & \text{if} \ m = 1 \mod 3, \\
      \frac{1}{3} - \frac{2m}{9n+6}, & \text{if} \ m = 2 \mod 3, 
   \end{cases}$$ and $$\mathfrak{r}(m)^2 = \left ( \frac{2m}{9n+6} -1 \right )^2 - \frac{8n}{9n+6}.$$

Set $G(m) = \left ( \frac{1}{3} - \frac{2m}{9n+6} \right )^2 - \mathfrak{r}(m)^2$ so that $\mathfrak{d}(m)^2 - \mathfrak{r}(m)^2 \geq G(m) = \frac{8m-16}{3(9n+6)}.$ The function $G$ is linear in $m$ and has root at $m=2$ so it satisfies $G(m)\geq 0$ for all $m\geq 2$, implying in particular that $\mathfrak{d}(m) >\mathfrak{r}(m)$ holds for every $m \geq 3$. For $m=2$, the closest odd integer to $\mathfrak{c}(2)$ is $1$. Therefore, $(1,2)$ is the only other candidate lattice point besides $(1,1)$.

Since $f(1,1)=-n-6=f(1,2)$, by Proposition~\ref{lattice} and Theorem~\ref{theo:latt}, we have $$d(\Sigma(2n+1,3n+1,6n+5)) = \frac{f(1,1) + q+r}{4} = \frac{-n-6 + 9n+6}{4} = 2n.$$

\subsection*{Case 4} Take $p=4n+3$, $q=5n+4$, $r=20n+11$. Then for $m \in \{ 0,1,\ldots, 2n+1 \}$ the quantities $\mathfrak{c}(m)$, $\mathfrak{d}(m)$ and $\mathfrak{r}(m)$ are simply computed as follows. $$\mathfrak{c}(m) = \left (1+ \frac{1}{5n+3} \right ) \frac{2}{5}m$$ so that 
$$\mathfrak{d}(m) = \begin{cases} 
      1 - \frac{2m}{25n+15}, & \text{if} \ m = 0 \mod 5 , \\
      \frac{3}{5} - \frac{2m}{25n+15}, & \text{if} \ m = 1 \mod 5, \\
      \frac{1}{5} - \frac{2m}{25n+15}, & \text{if} \ m = 2 \mod 5, \\
     \frac{1}{5} + \frac{2m}{25n+15}, & \text{if} \ m = 3 \mod 5, \\
      \frac{3}{5} + \frac{2m}{25n+15}, & \text{if} \ m = 4 \mod 5, \\ 
   \end{cases}$$ and $$\mathfrak{r}(m)^2 = \left ( \frac{2m}{25n+15} -1 \right )^2 - \frac{16n+8}{25n+15}.$$
   
We split the argument into several subcases and analyse them by using Proposition~\ref{lattice} and Theorem~\ref{theo:latt}. First assume that $m \neq 2 \ \text{or} \ 3 \ \text{mod} \ 5$. Define $H(m) = \left ( \frac{3}{5} - \frac{2m}{25n+15} \right) - \mathfrak{r}(m)^2$ so that $\mathfrak{d}(m)^2 - \mathfrak{r}(m)^2 \geq H(m) = \frac{8m-8}{25(5n+3)}$. The function $H$ is linear in $m$ and has a root at $m=1$. Since $H(m) >0$, we have $\mathfrak{d}(m) > \mathfrak{r}(m)$ for all $m>1$. For $m=1$, the closest odd integer to $\mathfrak{c}(1)$ is $1$. Hence $(1,1)$ is the only candidate lattice point in this subcase.  

Next suppose that $m= 2 \ \text{mod} \ 5$. Write $m=5k+2$ where $k \in \{ 0,1, \ldots, \lfloor \frac{2n-1}{5} \rfloor \}$. The closest odd integer to $\mathfrak{c}(5k+2)$ is $2k+1$ . Then the maximum value of the function $f(2k+1, 5k+2) = -16k -n-7$ is achieved when $k=0$. Thus $(1,2)$ is the only candidate lattice point in this subcase. 

Finally assume that $m= 3 \ \text{mod} \ 5$. Write $m=5k+3$ where $k \in \{ 0,1, \ldots, \lfloor \frac{2n-2}{5} \rfloor \}$. The closest odd integer to $\mathfrak{c}(5k+3)$ is $2k+1$ . Then the maximum value of the function $f(2k+1, 5k+3) = -24k -n-15$ is achieved when $k=0$. Thus $(1,3)$ is the only candidate lattice point in this subcase. 
   
Since $f(1,2)=-n-7 > f(1,3) = -n-15 > f(1,1)= -9n-7$, we have $$d(\Sigma(4n+3,5n+4,20n+11)) = \frac{f(1,2) + q+r}{4} = \frac{-n-7 + 25n+15}{4} = 6n+2.$$

\subsection*{Case 5} Take $p=2n+1$, $q=2n+2$, $r=4n^2+6n+1$ for $n \geq 1$. For $m \in \{0,1, \ldots, n \}$, the closest odd integer to $\mathfrak{c}(m) = \frac{4(n+1)}{4n^2 + 8n +3}$ is $1$. We look at $$f(1,m)= -4(m-n)^2 + 4(m-2n) -3.$$ The maximum is achieved at $m=n$. Since $f(1,n) = -4n-3$, by Proposition~\ref{lattice} and Theorem~\ref{theo:latt}, we have $$d(\Sigma(2n+1,2n+2,4n^2+6n+1)) = \frac{f(1,n) + q+r}{4} = \frac{-4n-3 + 4n^2 + 8n +3}{4} = n^2 +n.$$
\begin{flushright}
$\square$  
\end{flushright}
\begin{remark}
When $p=2n$, $q=2n+1$, and $r=4n^2-1$, then we have $d(2n,2n+1,4n^2+2n-1) = n^2 +2n$ by Proposition~\ref{deven}. Together with Case 5, this is a special part of the previous result of Borodzik and N\'emethi \cite[Theorem 1.6]{BN13}, which describes Heegaard Floer homology of $(+1)$-surgery of torus knots $T(p,p+1)$ for $p \geq 2$.
\end{remark}

\begin{remark}
Using our techniques one can simultaneously compute $d$-invariants of any infinite family of Brieskorn spheres $\Sigma(p,q,r)$ satisfying \eqref{pqr} provided that one knows the odd integer closest to $\mathfrak{c}(m)$.
\end{remark}

\section{Acknowledgements}
This research was  supported by BAGEP award of the Science Academy and Bo\u{g}azi\c{c}i University Research Fund Grant Number 12482.

\bibliography{references}
\bibliographystyle{amsalpha}

\end{document}